\newcommand{\cB}{\ensuremath{\mathcal B}}
\newcommand{\cE}{\ensuremath{\mathcal E}}
\newcommand{\cP}{\ensuremath{\mathcal P}}
\newcommand{\eps}{\varepsilon}
\renewcommand{\phi}{\varphi}
\renewcommand{\rho}{\varrho}
\DeclareMathOperator*{\N}{\mathbb{N}}
\DeclarePairedDelimiter\ceil{\lceil}{\rceil}
\let\setminus=\smallsetminus
\let\emptyset=\varnothing
\newcommand{\Gnp}{G_{n, p}}
\declaretheorem[parent=section]{theorem}
\declaretheorem[sibling=theorem]{lemma}
\declaretheorem[sibling=theorem,style=definition]{definition}
\setlist{itemsep=0.1em, topsep=0.1em, parsep=0.1em, partopsep=0.1em}
\colorlet{RoyalRed}{red!70!black}
\definecolor{RoyalBlue}{rgb}{0.25, 0.41, 0.88}
\definecolor{RoyalAzure}{rgb}{0.0, 0.22, 0.66}
\newlength{\bibitemsep}\setlength{\bibitemsep}{0.5pt}
\newlength{\bibparskip}\setlength{\bibparskip}{0.5pt}
\let\oldthebibliography\thebibliography
\renewcommand\thebibliography[1]{%
  \oldthebibliography{#1}%
  \setlength{\parskip}{\bibitemsep}%
  \setlength{\itemsep}{\bibparskip}%
}
\title{The size-Ramsey number of cubic graphs}
\author{
  David Conlon\thanks{Department of Mathematics, California Institute of
  Technology, Pasadena, CA 91125, USA. Email: \texttt{dconlon@caltech.edu}.
  Research supported by NSF Award DMS-2054452.}
  \and
  Rajko Nenadov\thanks{Google Z\"urich. Email: \texttt{rajkon@gmail.com}.}
  \and
  Milo\v{s} Truji\'{c}\thanks{Institute of Theoretical Computer Science, ETH
  Z\"{u}rich, 8092 Z\"{u}rich, Switzerland. Email: \texttt{mtrujic@inf.ethz.ch}.
  Research supported by grant no.\ 200020 197138 of the Swiss National Science
  Foundation.}
}
\date{}
\begin{document}
\maketitle

\begin{abstract}
  We show that the size-Ramsey number of any cubic graph with $n$ vertices is
  $O(n^{8/5})$, improving a bound of $n^{5/3 + o(1)}$ due to Kohayakawa,
  R\"{o}dl, Schacht, and Szemer\'{e}di. The heart of the argument is to show
  that there is a constant $C$ such that a random graph with $C n$ vertices
  where every edge is chosen independently with probability $p \geq C n^{-2/5}$
  is with high probability Ramsey for any cubic graph with $n$ vertices. This
  latter result is best possible up to the constant.
\end{abstract}

\section{Introduction}

We say that a graph $G$ is \emph{Ramsey} for another graph $H$, and write $G
\rightarrow H$, if every colouring of the edges of $G$ with two colours contains
a monochromatic copy of $H$. In this paper, we study the quantity $\hat r(H)$,
the \emph{size-Ramsey number} of $H$, defined as the smallest $m \in \N$ such
that there exists a graph $G$ with $m$ edges which is Ramsey for $H$. As any
sufficiently large complete graph is Ramsey for $H$, this notion is
well-defined.

The study of size-Ramsey numbers was initiated by Erd\H{o}s, Faudree, Rousseau,
and Schelp~\cite{erdHos1978size} in 1978. Their paper already contains the
observation, which they attribute to Chv\'atal, that $\hat r(K_n) =
\binom{r(K_n)}{2}$, where $r(K_n)$ is the usual \emph{Ramsey number}, i.e., the
smallest integer $N$ such that $K_N \rightarrow K_n$. In other words, when $H =
K_n$ one cannot do better than taking $G$ to be the smallest complete graph that
is Ramsey for $H$. A significantly more interesting example is when $H = P_n$,
the path on $n$ vertices, for which Beck~\cite{beck1983size} has shown that
$\hat r(P_n) = O(n)$. Subsequent work has extended this result to many other
familes of graphs, including bounded-degree trees~\cite{friedman1987expanding},
cycles~\cite{haxell1995induced}, and, more recently, powers of paths and
bounded-degree trees~\cite{berger2019size} and long
subdivisions~\cite{draganic_kolutovi}. For some further recent developments, see
\cite{clemens2019size, han2020multicolour, han2019size, kamcev2021size,
letzter2021size}.

Moving away from trees and tree-like graphs, Beck~\cite{beck1990size} asked
whether the size-Ramsey number of every bounded-degree graph is linear in its
order. R\"odl and Szemer\'edi~\cite{rodl2000size} answered this in the negative
by showing that there exists a constant $c > 0$ and, for every $n$, an
$n$-vertex {\em cubic graph} $H$, that is, a graph with maximum degree three,
such that $\hat r(H) \geq n (\log n)^c$. To date, this remains the best known
general lower bound for the size-Ramsey number of bounded-degree graphs. Despite
this, the conjecture of R\"odl and Szemer\'edi~\cite{rodl2000size} that there
exists a constant $\eps > 0$ and, for every $n$, an $n$-vertex cubic graph $H$
such that $\hat r(H) \geq n^{1+\eps}$ is widely believed.

Regarding upper bounds, a result of Kohayakawa, R\"{o}dl, Schacht, and
Szemer\'{e}di~\cite{kohayakawa2011sparse} shows that $\hat r(H) \leq
n^{2-1/\Delta + o(1)}$ for any graph $H$ on $n$ vertices with maximum degree
$\Delta$. Here we improve this bound for cubic graphs.

\begin{theorem}
 \label{thm:main-theorem}
  There exists a constant $K$ such that $\hat r(H) \leq Kn^{8/5}$ for every
  cubic graph $H$ with $n$ vertices.
\end{theorem}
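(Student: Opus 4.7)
The plan is to establish the stronger statement that there are constants $C, K$ such that with high probability the random graph $G = G_{Cn,p}$ with $p = Cn^{-2/5}$ has at most $Kn^{8/5}$ edges and satisfies $G \to H$ for every $n$-vertex cubic graph $H$. A Chernoff bound yields the edge estimate, so I focus on the Ramsey property.

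My approach uses sparse regularity. Given any $2$-edge-colouring of $G$, the majority colour class (say red) has density $\Omega(p)$, so the sparse regularity lemma produces an equipartition $V_1 \cup \cdots \cup V_M$ of $V(G)$ with all but $o(M^2)$ pairs $(V_i, V_j)$ being $(\eps, p)$-regular in red. Taking the reduced graph whose edges mark regularity together with density at least, say, $p/4$, standard estimates show this graph has positive density, and Ramsey's theorem provides a monochromatic clique in it of any desired fixed size $t$. Pulling this back yields $t$ vertex classes of size $\Theta(n)$ whose pairwise red restrictions are all $(\eps, \Theta(p))$-regular --- a red regular blow-up of $K_t$. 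Choosing $t \ge 4$ ensures by Brooks' theorem that $H$ admits a homomorphism to $K_t$, which we attempt to lift to an embedding $H \hookrightarrow V_1 \cup \cdots \cup V_t$.

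The main step is this final embedding. Since $H$ is cubic, every subgraph $F$ with $v(F) \ge 3$ satisfies $(e(F)-1)/(v(F)-2) \le 5/2$, with equality only for $F = K_4$, so $m_2(H) \le 5/2$; this means $p = Cn^{-2/5}$ sits exactly at the KLR threshold $n^{-1/m_2(H)}$. In principle the sparse KLR theorem of Conlon--Gowers, Balogh--Morris--Samotij, and Saxton--Thomason would produce such an embedding. The main obstacle, and where most of the work will lie, is that we need the embedding to succeed for \emph{every} $n$-vertex cubic $H$ simultaneously, and there are $2^{\Theta(n \log n)}$ such graphs; a black-box KLR theorem does not quite deliver this. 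One therefore wants a hands-on embedding in the spirit of Friedman--Pippenger or Alon--Krivelevich--Sudakov, revealing the random edges of $G$ in stages along a well-chosen ordering of $V(H)$, and at each step using Janson's inequality together with the bounded-degree structure of $H$ and the $(\eps, p)$-regularity of the blow-up to show that the set of valid extensions of the partial embedding is large with exponentially small failure probability. The fact that $m_2(H)$ is saturated precisely by $K_4$-subgraphs forces particular care at dense spots of $H$ when determining this order of exposure.
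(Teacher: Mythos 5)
Your setup (reduction to a random-graph statement, sparse regularity plus a reduced-graph pigeonhole to extract a red $(\eps,p)$-regular blow-up of $K_t$, and the observation that $m_2(H)\le 5/2$ for cubic $H$ with equality only at $K_4$) matches the paper's preliminaries. But the crucial embedding step is where your plan has a genuine gap, and the two ideas you put forward do not close it.

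First, ``lifting a homomorphism $H\to K_t$ to the regular blow-up,'' i.e.\ partitioning $V(H)$ into $t$ independent sets and embedding them one at a time, is exactly the method of Kohayakawa--R\"odl--Schacht--Szemer\'edi that the paper is trying to beat, and it stalls at $p=\Theta(n^{-1/3})$: when you embed the last independent set, a degree-3 vertex must land in the common neighbourhood of three already-placed vertices, and in $G_{n,p}$ that common neighbourhood is typically empty once $p=o(n^{-1/3})$. Nothing in your outline addresses this. Second, the proposal to ``reveal the random edges of $G$ in stages along a well-chosen ordering of $V(H)$ and at each step use Janson's inequality'' is not available in the Ramsey setting: the adversary's $2$-colouring is a function of the entire realisation of $\Gamma\sim G_{n,p}$, so by the time you are embedding $H$ into the red subgraph $G\subseteq\Gamma$ there is no residual randomness to apply Janson to. The correct shape of the argument is to establish, up front and via union bounds, deterministic w.h.p.\ properties of $\Gamma$ (regularity inheritance into $\Theta(np)$-sized neighbourhoods, a local K\L R statement at scale $\Theta(np)$, expansion of common neighbourhoods), and then carry out a purely deterministic embedding of $H$ into $G$ using those properties. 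The missing structural idea is the paper's Lemma~\ref{lem:decomposition}: every connected cubic $H\ne K_4$ decomposes into blocks $B_1,\dots,B_t$, each inducing a path or an induced cycle of length at least four, so that every vertex of $B_i$ has at most one neighbour in $B_1\cup\dots\cup B_{i-1}$. This guarantees each vertex about to be embedded has a candidate set of size $\Theta(np)$ (one pre-embedded neighbour, not three), and one then embeds an entire block in one sweep --- a path via the tree-embedding lemma and a cycle of length $\ell\ge4$ via K\L R at scale $\Theta(np)$, which needs $p=\Omega\big((np)^{-(\ell-2)/(\ell-1)}\big)$ and for $\ell=4$ gives precisely $p=\Omega(n^{-2/5})$. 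The $K_4$ components of $H$ are handled separately with K\L R. Without some such decomposition, your plan as written does not get below $n^{-1/3}$.
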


Seeing as there is still a very large gap between the upper and lower bounds for
size-Ramsey numbers of cubic graphs, our result warrants some justification.
The key point is that all previous work on upper bounds for size-Ramsey numbers
has relied upon showing that suitable random graphs (or, in some cases, graphs
derived from random graphs by taking appropriate powers and blow-ups) are, with
high probability,\footnote{We say that a property holds with high probability
(or w.h.p.\ for brevity) if the probability it holds tends to $1$ as $n \to
\infty$.} Ramsey for the required target graph $H$.  Recall that $\Gnp$ stands
for the probability distribution over all graphs with $n \in \N$ vertices in
which each pair of vertices forms an edge independently with probability $p =
p(n) \in (0,1)$. We will use $\Gnp$ interchangeably to describe both this
distribution and an actual graph sampled from it. With this notation, our real
main result is then the following theorem about Ramsey properties of random
graphs.

\begin{theorem} \label{thm:main_gnp}
  There exist $c, K > 0$ such that if $p \ge Kn^{-2/5}$, then, with high
  probability, $\Gnp \rightarrow H$ for every cubic graph $H$ with at most $cn$
  vertices.
\end{theorem}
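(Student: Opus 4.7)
Let $p = Kn^{-2/5}$ with $K$ large, $N = cn$ with $c>0$ small, and $G = \Gnp$. The strategy is sparse regularity plus Ramsey's theorem on the reduced graph, followed by a Kohayakawa--\L uczak--R\"odl (K\L R) style embedding, finished by absorption. The exponent $2/5$ matches $1/m_2(K_4)$, and crucially every subgraph $F$ of a cubic graph satisfies $m_2(F) \leq m_2(K_4) = 5/2$, so $p = Kn^{-2/5}$ sits exactly at the K\L R threshold for every subgraph of $H$ that we might need to count or embed.

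I would first condition on the standard pseudo-random properties of $\Gnp$ at $p \geq Kn^{-2/5}$: an edge-discrepancy bound on all subsets of size $\geq \eps n$, typical codegree estimates, and the K\L R event for subgraphs of $2$-density at most $5/2$. Given any red/blue colouring of $E(G)$, apply the sparse regularity lemma to obtain an equitable partition into $t = t(\eps)$ parts, with most pairs $\eps$-regular of normalised density $\Theta(p)$. Colour each such pair by its majority colour; taking $\eps$ small enough that $t$ exceeds $R(4,4)$ by a wide margin (so that a small fraction of "bad" pairs can be discarded), Ramsey's theorem yields a monochromatic (WLOG red) copy of $K_4$ in this auxiliary $2$-coloured reduced graph: four parts $V_1, \ldots, V_4$ pairwise joined by $\eps$-regular pairs of red normalised density $\Theta(p)$. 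By the Hajnal--Szemer\'{e}di theorem, $H$ admits an equitable $4$-colouring $V(H) = X_1 \sqcup \cdots \sqcup X_4$ with $|X_j| \leq cn/4 \leq c|V_j|$, and the problem reduces to mapping each $X_j \hookrightarrow V_j$ so that edges of $H$ between $X_j$ and $X_k$ are realised by red edges between $V_j$ and $V_k$.

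This final embedding is the core obstacle. K\L R as a counting statement only produces positive counts of small subgraphs, whereas embedding the linear-sized graph $H$ requires a spanning / blow-up type upgrade. My plan would be to handle this by an absorption argument: reserve in advance a small pseudo-random substructure $A \subseteq V(G)$ and a small flexible set $W \subseteq V(H)$; embed $H \setminus W$ into $V(G) \setminus A$ greedily using K\L R counts in combination with a Hall-type matching argument to respect the partition and the adjacency constraints; finally, absorb $W$ inside $A$ via a pre-built gadget. Designing an absorber that both survives the bulk embedding and can accept every possible leftover configuration is the technical heart of the argument; the cubic hypothesis is essential here since every vertex of $W$ has only three neighbours to reconcile, so the local absorption gadgets can be kept of bounded size, and each such gadget exists with the required frequency precisely because $p$ is above the K\L R threshold of the small $K_4$-like densest subgraphs it will contain.
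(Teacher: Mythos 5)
There is a genuine gap, and it is precisely the gap that the paper is written to close. Your plan after finding the $(\eps,p)$-regular tuple of parts is to $4$-colour $H$ via Hajnal--Szemer\'edi and then map each colour class $X_j$ into $V_j$ with a Hall-type matching argument assisted by K\L R counts and absorption. This is structurally the Alon--F\"uredi / Kohayakawa--R\"odl--Schacht--Szemer\'edi strategy, and it breaks for $p = n^{-2/5}$: after embedding three of the four colour classes, each remaining vertex $v \in X_4$ has three already-embedded neighbours, so its candidate set is a triple common neighbourhood of typical size $\Theta(np^3) = \Theta(n^{-1/5}) \to 0$. Hall's condition fails not because of a small exceptional set that absorption could repair, but for a constant fraction of $X_4$; no bounded-size gadget can accept a vertex whose candidate set is empty. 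The observation that $m_2(F)\le 5/2$ for every subgraph $F$ of a cubic graph is true and is indeed why $K_4$ is the bottleneck, but it controls the \emph{count} of small copies, not the existence of a spanning-in-the-parts embedding. K\L R gives you positive counts of $C_4$'s, say, inside regular tuples of linear-sized parts; it does not hand you an embedding of a graph with $\Theta(n)$ vertices, and that is exactly the obstruction you flag as ``the core obstacle'' without resolving.

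The missing idea is to never put three embedded neighbours around an unembedded vertex. The paper achieves this by decomposing $K_4$-free $H$ into blocks $B_1,\dotsc,B_t$, each an induced path or an induced cycle of length at least four, with a $1$-degenerate block ordering: each vertex of $B_i$ has at most one neighbour in $B_1\cup\dotsb\cup B_{i-1}$ (Lemma~\ref{lem:decomposition}); the finitely many $K_4$ components are embedded separately via Lemma~\ref{lem:KLR}. When block $B_i$ is embedded, every $v\in B_i$ therefore has a candidate set of size $\Theta(np)$ (one constraint from outside the block), and these candidate sets inherit $(\eps',p)$-regularity by Lemma~\ref{lem:typical_vertices}; the whole block is then threaded through these sets in one sweep by Lemma~\ref{lem:tree-embedding-lemma} (paths) or Lemma~\ref{lem:cycle-embedding-lemma} (cycles, where K\L R for $C_\ell$ inside sets of size $\Theta(np)$ is what forces $p\gtrsim (np)^{-2/3}$, i.e.\ $p\gtrsim n^{-2/5}$). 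Two further small corrections to your setup: you need a distance-two colouring of $H$ (so about $10$ colour classes for cubic $H$, not a proper $4$-colouring) to keep candidate sets disjoint and regular pairs in the right places; and the standard reduction extracts a monochromatic regular tuple directly (via a sparse Ramsey-type argument as in Chv\'atal et al.\ and Kohayakawa et al.), so Hajnal--Szemer\'edi is not needed.
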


It is easy to see that Theorem~\ref{thm:main_gnp} implies
Theorem~\ref{thm:main-theorem}, since $\Gnp$ typically has $\Theta(n^2p)$ edges,
by standard concentration inequalities. Moreover, Theorem~\ref{thm:main_gnp} is
optimal: a classic result of R\"{o}dl and Ruci\'{n}ski~\cite{rodl1993lower,
rodl1995threshold} shows that if $p = o(n^{-2/5})$, then, with high probability,
$\Gnp$ is not Ramsey for $K_4$. Hence, the statement of
Theorem~\ref{thm:main-theorem} is the most one can get out of using vanilla
random graphs (that is, without modifying them further). We will come back to
this point, and a further discussion on the limits of our method, in
Section~\ref{sec:concluding}.

The rest of the paper is organised as follows. In the next section, we give a
high-level overview of our argument. In Section~\ref{sec:preliminaries}, we
collect several results about random graphs and sparse regular pairs that will
be needed in the proof. Section~\ref{sec:strategy-trees} contains our two main
building blocks, which will allow us to thread trees and cycles through
prescribed sets of vertices. In Section~\ref{sec:main-proof}, we then combine
these building blocks with a decomposition result for cubic graphs to complete
the proof of Theorem~\ref{thm:main_gnp}. Finally, as mentioned above, we will
discuss the limits of our method and the potential next steps in
Section~\ref{sec:concluding}.

\section{Overview of the proof}
\label{sec:proof_overview}

Assume that the host graph $\Gamma \sim \Gnp$ satisfies a number of the
properties that typically hold in random graphs, such as expansion, no dense
spots, concentration of degrees and number of edges in subsets, etc. In their
proof, Kohayakawa, R\"{o}dl, Schacht, and
Szemer\'{e}di~\cite{kohayakawa2011sparse} start by showing that in any red/blue
colouring of the edges of $\Gamma$, one can find disjoint sets $V_1, \dotsc,
V_{20} \subseteq V(\Gamma)$, each of order $\alpha n$ for some $\alpha > 0$,
such that each pair $(V_i, V_j)$ is \emph{$(\eps, p)$-regular} with at least
$|V_i||V_j|p/3$ edges between $V_i$ and $V_j$ in one of the colours, say red.
That is, the density of edges between any two sufficiently large sets $U_i
\subseteq V_i$ and $U_j \subseteq V_j$ is roughly the same as the density
between $V_i$ and $V_j$, with a discrepancy of at most $\eps p$ (see
Definition~\ref{def:regularity} below). This is a standard step in the
regularity method and we refer the reader to \cite{chvatal1983ramsey,
kohayakawa2011sparse} for more details.

Suppose, therefore, that we have a collection of large sets of vertices such
that the distribution of red edges between any two of them is fairly uniform. We
wish to use this structure to show that $R$, the subgraph of $\Gamma$ consisting
of all the red edges, contains any particular cubic graph $H$ on at most $cn$
vertices. The strategy used in \cite{kohayakawa2011sparse} at this point follows
an idea of Alon and F\"{u}redi~\cite{alon1992spanning}: split the vertex set of
$H$ into, say, 20 independent sets and then embed these sets one at a time using
Hall's matching criteria, with the $i$-th set being embedded into $V_i$. When we
come to embed the last such set, every remaining vertex of degree three has to
be mapped into the common neighbourhood of three previously embedded vertices.
However, if $p = o(n^{-1/3})$, three typical vertices in the random graph will
have an empty common neighbourhood. It is for this reason that the methods of
\cite{kohayakawa2011sparse} break down at this point.

To circumvent this issue, we borrow an idea from the work of the first two
authors together with Ferber and \v{S}kori\'{c}~\cite{conlon2017almost}. Assume
that $H$ is connected and remove from it an induced cycle of length at least
four. This leaves us with a $2$-degenerate graph $H'$. That is, we can order the
vertices of $H'$ in such a way that every vertex has at most two of its
neighbours preceding it. One might then hope that $p \gg n^{-1/2}$ is sufficient
to embed $H'$. Having embedded $H'$, we then need to replace the deleted cycle,
which we suppose for illustration is a $C_4$. As the graph is cubic, each vertex
$v$ of such a $C_4$ already has at most one embedded neighbour, so there will be
a \emph{candidate set} of order roughly $np$ in which we can embed $v$. If all
four candidate sets are disjoint and span four $(\eps, p)$-regular pairs of
density $\Theta(p)$ in the correct cyclic order, then a result of Gerke,
Kohayakawa, R\"{o}dl, and Steger~\cite{gerke2007small} implies that we can find
the desired copy of $C_4$ between these sets, provided $p \gg (np)^{-2/3}$ (see
Lemma~\ref{lem:KLR} below). Rearranging, this gives $p \gg n^{-2/5}$ ---
precisely the bound promised by Theorem~\ref{thm:main_gnp}. Note that,
crucially, the cycle we embed at the end is of length at least four. If it were
a triangle instead, we would need $p \gg (np)^{-1/2}$ in the last step, which
leads back to the bound $p \gg n^{-1/3}$.

In practice, our actual approach takes the idea of partitioning even further.
Instead of taking out one cycle and relying on the fact that the remaining graph
$H'$ is $2$-degenerate, we partition $H$ (which we may assume contains no
$K_4$'s, since they can be set aside and dealt with separately) into
\emph{blocks}: first removing a maximal collection of disjoint induced cycles of
length at least four and then partitioning what remains into induced paths (see
Lemma~\ref{lem:decomposition} below). Moreover, these blocks $B_1, \dotsc, B_t$
can be placed in a `$1$-degenerate ordering', meaning that each vertex in $B_i$
has at most one neighbour in $B_{1} \cup \dotsb \cup B_{i-1}$ for every $i \in
\{2, \dotsc, t\}$. We then find a copy of $H$ by embedding one whole block at a
time. Crucially, whenever we are about to embed a block $B_i$, every $v \in B_i$
has at most one previously embedded neighbour, so the candidate set for $v$ has
order $\Omega(np)$. This is large enough that the regularity property is
inherited by any relevant pair of candidate sets, which then allows us to embed $B_i$, which is either a path
or a cycle, in one sweep (see Lemmas~\ref{lem:tree-embedding-lemma}
and~\ref{lem:cycle-embedding-lemma} below).

\section{Preliminaries}\label{sec:preliminaries}

In this section, we collect several results about random graphs that we will
need, with a particular focus on the properties of regular pairs in random
graphs. For a thorough, although now somewhat outdated, treatment of the latter
topic, we refer the reader to the survey of Gerke and
Steger~\cite{gerke2005sparse}. Though many of the results hold in greater
generality, we have tailored the statements towards their later use in the proof
of Theorem~\ref{thm:main_gnp}. Where statements depart from their usual form in
the literature, we also give, at the very least, a sketch of the proof.

We begin with a standard concentration result, which easily follows from
combining Chernoff's inequality (see, e.g.,
\cite[Corollary~2.3]{janson2011random}) with a union bound. Here and throughout
we write $\hat N_G(X, Y)$ for the set of \emph{common neighbours} of the
vertices from $X$ in $Y$, that is, $\hat N_G(X, Y) := Y \cap \bigcap_{x \in X}
N_G(x)$ and $\hat N_G(X) = \hat N_G(X, V(G) \setminus X)$.

\begin{lemma}\label{lem:common-neighbours-expansion}
  For every $d \in \N$ and $\delta \in (0, 1)$, there exists a positive constant
  $K$ such that, for $p \geq (K\log n/n)^{1/d}$, the random graph $\Gamma \sim
  \Gnp$ w.h.p.\ has the following property. For every family of disjoint
  $d$-sets $\cP \subseteq \binom{V(\Gamma)}{d}$ of size $|\cP| \leq \delta/p^d$,
  \[
    \Big| \bigcup_{S \in \cP} \hat N_\Gamma(S) \Big| = (1 \pm \delta)|\cP|np^d.
  \]
\end{lemma}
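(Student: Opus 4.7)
The plan is to fix a family $\cP = \{S_1, \dots, S_m\}$ of $m$ pairwise disjoint $d$-sets with $m \le \delta/p^d$, apply Chernoff's inequality to $Z := \bigl|\bigcup_{S \in \cP} \hat N_\Gamma(S)\bigr|$, and then take a union bound over all such $\cP$.

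Decompose $Z = \sum_{v \in V(\Gamma)} X_v$, where $X_v := \1\bigl[v \in \bigcup_{S \in \cP} \hat N_\Gamma(S)\bigr]$. The crucial observation is that whenever $v \notin \bigcup_{S \in \cP} S$, the indicator $X_v$ depends only on the edges between $v$ and $\bigcup_{S \in \cP} S$. Consequently, as $v$ ranges over $V(\Gamma) \setminus \bigcup_{S \in \cP} S$, the $X_v$ involve pairwise disjoint sets of edges and so are mutually independent. Moreover, because the sets in $\cP$ are themselves pairwise disjoint, the events $\{v \in \hat N_\Gamma(S)\}$ for distinct $S \in \cP$ are independent, giving
\[
  \Pr[X_v = 1] = 1 - (1 - p^d)^m = (1 \pm \delta/2)\,mp^d
\]
by a first-order Taylor expansion (using $mp^d \le \delta$). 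Under the hypothesis on $p$ we have $d|\cP| \le d\delta/p^d = o(n)$, so summing over the independent indicators yields $\E\bigl[\sum_{v \notin \bigcup S} X_v\bigr] = (1 \pm 2\delta/3)\,mnp^d$, while the remaining sum $\sum_{v \in \bigcup S} X_v \le dm$ contributes only a negligible additive error compared with $\delta\,mnp^d$.

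Chernoff's inequality then gives $\Pr\bigl[|Z - mnp^d| > \delta\,mnp^d\bigr] \le 2\exp(-c_\delta\,mnp^d)$ for some $c_\delta > 0$ depending only on $\delta$. The number of candidate families of $m$ disjoint $d$-sets is at most $\binom{n}{d}^m \le n^{dm}$, so a union bound yields total failure probability at most
\[
  \sum_{m=1}^{\floor*{\delta/p^d}} 2 \exp\bigl(m(d\log n - c_\delta\,np^d)\bigr).
\]
Choosing $K$ large enough in terms of $d$ and $\delta$, the hypothesis $p^d \ge K\log n/n$ forces $c_\delta\,np^d \ge 2d\log n$, so each term is at most $\exp(-dm\log n)$ and the resulting geometric series is $o(1)$, as desired.

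The only subtle point is the independence claim: for $v, v' \in \bigcup_{S \in \cP} S$, the indicators $X_v$ and $X_{v'}$ can both involve the edge $\{v, v'\}$ and so need not be independent. This is precisely why we split off the $O(dm)$ ``internal'' vertices and bound their contribution trivially; the estimate succeeds because the restriction $m \le \delta/p^d$ keeps $dm$ much smaller than $\delta\,mnp^d$ under our assumption on $p$.
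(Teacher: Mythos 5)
Your proof is correct and follows exactly the route the paper alludes to (the paper gives no details, stating only that the lemma ``easily follows from combining Chernoff's inequality with a union bound''). The decomposition into independent indicators for $v \notin \bigcup_{S \in \cP} S$, the trivial handling of the $O(dm)$ internal vertices, and the union bound over at most $n^{dm}$ families are the standard ingredients, and your parameter choices work: the Taylor estimate $1-(1-p^d)^m \in [(1-\delta/2)mp^d,\,mp^d]$ holds since $mp^d\le\delta$, the expectation lies in $(1\pm 2\delta/3)mnp^d$ for large $n$, and $K\ge 2d/c_\delta$ makes the union-bound series geometric with ratio $n^{-d}$.
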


\subsection{Properties of $(\eps, p)$-regular pairs}

We have already referred to $(\eps, p)$-regular pairs several times. The formal
definition is as follows.

\begin{definition}\label{def:regularity}
  Let $G$ be a graph and let $V_1, V_2 \subseteq V(G)$ be disjoint subsets. We
  say that the pair $(V_1, V_2)$ is \emph{$(\eps, p)$-regular} for some $0 <
  \eps, p \le 1$ if, for every $U_1 \subseteq V_1$, $U_2 \subseteq V_2$ with
  $|U_1| \ge \eps|V_1|$, $|U_2| \geq \eps|V_2|$, we have
  \[
    \big|d_G(U_1, U_2) - d_G(V_1,V_2)\big| \leq \eps p,
  \]
  where $d_G(A, B) = e_G(A, B)/(|A||B|)$ denotes the edge density of a given
  pair.
\end{definition}

It follows from the definition that if $d = d_G(V_1, V_2) = \Theta(p)$ and
$\eps$ is sufficiently small, then there cannot be more than $\eps |V_1|$
vertices in $V_1$ which have fewer than, say, $d|V_2|/2$ neighbours in $V_2$.
The next result shows that we can even find large subsets $V_1' \subseteq V_1$
and $V_2' \subseteq V_2$ such that each vertex in $V_1'$ has at least $d|V_2|/2$
neighbours in $V_2'$ and vice versa and, moreover, that this property can be
achieved for many pairs simultaneously.

\begin{lemma}\label{lem:cleanup}
  For every $\Delta \in \N$ and $\gamma > 0$, there exists $\eps_0 > 0$ such
  that the following holds for any $0 < \eps \le \eps_0$ and $p \in (0,1)$. Let
  $H$ be a graph with maximum degree $\Delta$ and let $\{V_i\}_{i \in V(H)}$ be
  a family of subsets of some graph $G$ such that $(V_i,V_j)$ is
  $(\eps,p)$-regular of density $d \geq \gamma p$ (with respect to $G$) for
  every $ij \in H$. Then, for every $i \in V(H)$, there exists $V_i' \subseteq
  V_i$ of order $|V_i'| \ge (1-\Delta\eps)|V_i|$ such that $\deg_G(v, V_j') \geq
  d|V_j|/2$ for every $v \in V_i'$ and all $ij \in H$.
\end{lemma}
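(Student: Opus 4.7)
The plan is to construct each $V_i'$ by removing from $V_i$ a small set of ``atypical'' vertices in two stages. A one-shot definition would be circular, since whether $v \in V_i$ behaves well with respect to $V_j$ depends on the as-yet-undefined $V_j'$; the two-stage structure breaks this circularity.

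\textbf{Stage 1 (low degree into $V_j$).} For each edge $ij \in H$ with $d := d_G(V_i, V_j) \geq \gamma p$, let
\[
  B_{ij} := \bigl\{ v \in V_i : \deg_G(v, V_j) < (d - \eps p)|V_j| \bigr\}.
\]
The defining inequality of $(\eps, p)$-regularity gives $|B_{ij}| \leq \eps |V_i|$; otherwise $d_G(B_{ij}, V_j)$ would fall strictly below $d - \eps p$. Put $W_i := V_i \setminus \bigcup_{j \,:\, ij \in H} B_{ij}$, so that $|W_i| \geq (1 - \Delta\eps)|V_i|$ and $\deg_G(v, V_j) \geq (d - \eps p)|V_j|$ for every $v \in W_i$ and every neighbour $j$ of $i$ in $H$.

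\textbf{Stage 2 (high degree into $V_j \setminus W_j$).} The sets $V_j \setminus W_j$ are now fixed, with $|V_j \setminus W_j| \leq \Delta\eps|V_j|$. For each edge $ij \in H$, when $|V_j \setminus W_j| \geq \eps|V_j|$, regularity applied to the pair $(V_i, V_j \setminus W_j)$ (whose density is at most $d + \eps p$) forces
\[
  C_{ij} := \bigl\{ v \in V_i : \deg_G(v, V_j \setminus W_j) > 2(d + \eps p)|V_j \setminus W_j| \bigr\}
\]
to satisfy $|C_{ij}| \leq \eps |V_i|$. When $|V_j \setminus W_j| < \eps|V_j|$, the trivial bound $\deg_G(v, V_j \setminus W_j) \leq \eps|V_j|$ already suffices and I take $C_{ij} := \emptyset$. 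Set $V_i' := W_i \setminus \bigcup_{j \,:\, ij \in H} C_{ij}$; a constant-factor rescaling of $\eps_0$ absorbs the second union and yields $|V_i'| \geq (1 - \Delta\eps)|V_i|$.

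For $v \in V_i'$ and $j$ with $ij \in H$, the decomposition $V_j \setminus V_j' = (V_j \setminus W_j) \cup (W_j \setminus V_j')$, together with the Stage~1 and Stage~2 bounds and the trivial bound $\deg_G(v, W_j \setminus V_j') \leq |W_j \setminus V_j'| \leq \Delta\eps|V_j|$, yields $\deg_G(v, V_j') \geq d|V_j|/2$, provided $\eps_0 = \eps_0(\gamma, \Delta)$ is small enough (crucially using $d \geq \gamma p$ to absorb the terms of order $\eps p |V_j|$). The main obstacle is the circularity of the cleanup, resolved by pinning Stage~2 to the already-fixed set $V_j \setminus W_j$ rather than to the floating $V_j \setminus V_j'$; the residual coupling through $|W_j \setminus V_j'|$ contributes only an additive $\Delta\eps|V_j|$ in the degree count, which is absorbed into the slack of the degree bound.
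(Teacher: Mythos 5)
Your Stage~1 and Stage~2 bounds are both correct applications of $(\eps,p)$-regularity, and the idea of breaking circularity by pinning the second cleanup to the already-fixed sets $V_j \setminus W_j$ is natural. However, the final absorption step has a genuine gap. After both stages you still need to control $\deg_G(v, W_j \setminus V_j')$ for $v \in V_i'$, and the only tool you have is the trivial bound $\deg_G(v, W_j \setminus V_j') \le |W_j \setminus V_j'| \le \Delta\eps|V_j|$. This loss is of order $\eps|V_j|$, whereas the target $d|V_j|/2$ is of order $p|V_j|$ (since $d \ge \gamma p$ and, in the application, $d = \Theta(p)$ with $p \to 0$). Your parenthetical claim that the error terms are ``of order $\eps p|V_j|$'' is not correct for this term: writing out the inequality, you need
\[
(d - \eps p) - 2(d+\eps p)\Delta\eps - \Delta\eps \;\ge\; d/2,
\]
and the isolated $\Delta\eps$ summand forces $\eps \lesssim d/\Delta \le \gamma p/\Delta$ (up to constants), so your $\eps_0$ would have to shrink with $p$. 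The lemma, as stated and as used (with $p \sim n^{-2/5}$), requires $\eps_0$ to depend only on $\Delta$ and $\gamma$.

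The underlying difficulty is that the two-stage scheme does not actually terminate the circularity — it only pushes it one level down: to bound degrees into $W_j \setminus V_j'$ nontrivially you would need a Stage~3 cleanup on the $V_i$ side, which creates a new bad set, and so on. The paper's proof handles this by a single iterative process: it maintains bad sets $B_i$, repeatedly adds to $B_i$ any vertex $v \in V_i \setminus B_i$ with $\deg_G(v, V_j \setminus B_j) < d|V_j|/2$ for some $ij \in H$, and shows via regularity and pigeonhole that no $B_i$ can grow past $\Delta\eps|V_i|$. When the process halts naturally, the surviving vertices satisfy the required degree bound \emph{into the final cleaned sets} directly, so there is no leftover term to absorb. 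That termination-based argument is what your approach is missing.
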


\begin{proof}[Sketch of the proof.]
  Set $B_i = \emptyset$ for every $i \in V(H)$ and repeat the following process:
  as long as there is an edge $ij \in H$ and a vertex $v \in V_i \setminus B_i$
  which has fewer than $d|V_j|/2$ neighbours in $V_j \setminus B_j$, add $v$ to
  $B_i$. Suppose that at some point one of the sets $B_i$ becomes larger than
  $\Delta\eps|V_i|$ and, once this happens, we terminate the process. By the
  pigeonhole principle, there must be a subset $B_i' \subseteq B_i$ of order
  $|B_i'| \ge \eps|V_i|$ and an edge $ij \in H$ such that every $v \in B_i'$
  satisfies $\deg_G(v, V_j \setminus B_j) < d|V_j|/2$. This implies that the
  density of the pair $(B_i', V_j \setminus B_j)$ is less than $d/2$. On the
  other hand, as $|V_j \setminus B_j| \ge (1-\Delta\eps)|V_j|$, the $(\eps,
  p)$-regularity property tells us that this density is close to $d$, a
  contradiction. In particular, the procedure terminates with each $B_i$ being
  of order at most $\Delta\eps|V_i|$ and the statement of the lemma follows.
\end{proof}

\subsubsection{Regularity inheritance}
\label{sec:sparse_reg}

The following lemma is usually referred to as the \emph{slicing lemma} and follows
directly from the definition of $(\eps, p)$-regularity.

\begin{lemma}\label{lem:large-subset-reg-inheritance}
  Let $0 < \eps_1 < \eps_2 \leq 1/2$, $p \in (0, 1)$, and let $(X, Y)$ be an
  $(\eps_1, p)$-regular pair. Then any two subsets $X' \subseteq X$ and $Y'
  \subseteq Y$ of order $|X'| \geq \eps_2|X|$ and $|Y'| \geq \eps_2|Y|$ form an
  $(\eps_1/\eps_2, p)$-regular pair of density $d(X, Y) \pm \eps_1 p$.
\end{lemma}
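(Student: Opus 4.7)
The plan is to derive both assertions directly from the definition of $(\eps_1, p)$-regularity of $(X, Y)$, using two invocations of the defining inequality together with a triangle inequality. No clever idea is required: the whole content of the lemma is that the hypothesis $\eps_2 \le 1/2$ is calibrated precisely to absorb the factor of two that the triangle inequality costs.

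First I would fix arbitrary $U \subseteq X'$ and $W \subseteq Y'$ satisfying the thresholds $|U| \ge (\eps_1/\eps_2)|X'|$ and $|W| \ge (\eps_1/\eps_2)|Y'|$ demanded by $(\eps_1/\eps_2, p)$-regularity. The key observation is that, combined with $|X'| \ge \eps_2|X|$ and $|Y'| \ge \eps_2|Y|$, these force $|U| \ge \eps_1|X|$ and $|W| \ge \eps_1|Y|$, so $(U, W)$ is automatically admissible as a witness pair for the $(\eps_1, p)$-regularity condition on $(X, Y)$. This step yields $|d(U, W) - d(X, Y)| \le \eps_1 p$.

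Next, I would instantiate the same inequality in the special case $U = X'$, $W = Y'$, which are admissible because $|X'| \ge \eps_2|X| \ge \eps_1|X|$ and similarly for $Y'$. This produces the density estimate $|d(X', Y') - d(X, Y)| \le \eps_1 p$, which is already the second conclusion of the lemma. Combining the two bounds via the triangle inequality gives
\[
|d(U, W) - d(X', Y')| \le |d(U, W) - d(X, Y)| + |d(X, Y) - d(X', Y')| \le 2\eps_1 p \le (\eps_1/\eps_2) p,
\]
where the final step is exactly where the hypothesis $\eps_2 \le 1/2$ (equivalently $2\eps_2 \le 1$) is used.

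The only point that requires a little care — and the closest thing to an obstacle — is correctly translating the size thresholds when passing from the candidate pair $(U, W) \subseteq (X', Y')$ back to $(X, Y)$; once this bookkeeping is done, the conclusion is automatic.
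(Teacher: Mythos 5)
Your proof is correct and is exactly the standard argument that the paper alludes to when it says the slicing lemma "follows directly from the definition"; the two size translations, the two invocations of $(\eps_1,p)$-regularity of $(X,Y)$, and the triangle inequality absorbing the factor $2$ via $\eps_2\le 1/2$ are all done properly.
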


In other words, sufficiently large subsets of a regular pair again induce a
regular pair. This then allows us to take subsets of our regular pairs and yet
still assume that they are $(\eps, p)$-regular with $\eps$ sufficiently small, a
fact that we will use implicitly in our main proof.

The next lemma captures a key feature of sparse regularity, that, for subgraphs
of random graphs, the regularity property is typically inherited between
neighbourhoods of vertices.

\begin{lemma}
  \label{lem:typical_vertices}
  For all $\eps', \alpha, \gamma, \delta > 0$, there exist $\eps_0 =
  \eps_0(\eps',\gamma,\delta)$ and $K = K(\eps',\alpha,\gamma)$ such that, for
  every $0 < \eps \le \eps_0$ and $p \ge K(\log n/n)^{1/2}$, the random graph
  $\Gamma \sim \Gnp$ w.h.p.\ has the following property.

  Suppose $G \subseteq \Gamma$ and $V_1, V_2 \subseteq V(\Gamma)$ are disjoint
  subsets of order $\tilde n = \alpha n$ such that $(V_1, V_2)$ is
  $(\eps,p)$-regular of density $d \ge \gamma p$ with respect to $G$. Then there
  exists $B \subseteq V(\Gamma)$ of order $|B| \le \delta\tilde n$ such that for
  each $v, w \in V(\Gamma) \setminus (V_1 \cup V_2 \cup B)$ (not necessarily
  distinct) the following holds: for any two subsets $N_v \subseteq N_\Gamma(v,
  V_1)$ and $N_w \subseteq N_\Gamma(w, V_2)$ of order $\tilde n d/4$, both
  $(N_v, V_2)$ and $(N_v, N_w)$ are $(\eps', p)$-regular of density $(1 \pm
  \eps')d$ with respect to $G$.
\end{lemma}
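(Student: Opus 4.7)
Lemma~\ref{lem:typical_vertices} packages the classical one-sided and two-sided sparse regularity inheritance phenomena for $\Gnp$; see e.g.\ the survey~\cite{gerke2005sparse}. I outline the argument in three steps, plus a note on the main obstacle.

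\emph{Slicing reduction.} Choose auxiliary parameters $\eta > 0$ small and $\eps'' = \Theta(\gamma^2 \eps')$ (the implicit constant coming from the slicing lemma on both sides). By Chernoff and a union bound over $v \in V(\Gamma)$, whp all but at most $\exp(-\Omega(\eta^2 \tilde n p))\cdot n \le \delta n/4$ vertices satisfy $|N_\Gamma(v, V_i)| = (1\pm\eta)\tilde n p$ for both $i \in \{1,2\}$, using $\tilde n p \gg \log n$; include these exceptional vertices in $B$. For any surviving $v$, the prescribed $N_v \subseteq N_\Gamma(v, V_1)$ of size $\tilde n d/4$ satisfies $|N_v|/|N_\Gamma(v, V_1)| \ge d/(4(1+\eta)p) \ge \gamma/5$, since $d \ge \gamma p$, and similarly for $N_w$. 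By the slicing Lemma~\ref{lem:large-subset-reg-inheritance} applied once or twice, the conclusion reduces to showing that, after a further enlargement of $B$ to size at most $\delta n$, the \emph{full} neighborhood pairs $(N_\Gamma(v, V_1), V_2)$ and $(N_\Gamma(v, V_1), N_\Gamma(w, V_2))$ are $(\eps'', p)$-regular of density $d \pm \eps''p$ with respect to $G$, for all $v, w \in V(\Gamma)\setminus (V_1\cup V_2 \cup B)$.

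\emph{One-sided inheritance.} If $v$ fails the above for $(N_\Gamma(v, V_1), V_2)$, then there are witnesses $U_1 \subseteq N_\Gamma(v, V_1)$ and $U_2 \subseteq V_2$ with $|U_1| \ge \eps''|N_\Gamma(v, V_1)|$ and $|U_2| \ge \eps''\tilde n$ whose $G$-density deviates from $d$ by more than $\eps'' p$. Writing $U_1 = N_\Gamma(v)\cap A$ for some $A \subseteq V_1$, the failure reduces to $N_\Gamma(v)$ meeting a structured subset $A^\star \subseteq V_1$ of size at most $\eps\tilde n$ (produced by the $(\eps, p)$-regularity defect set of $(V_1, V_2)$ w.r.t.\ $G$ against $U_2$) in an anomalous number of vertices. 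A Chernoff bound on the random edges of $\Gnp$ between $v$ and $V_1$ gives a failure probability of $\exp(-\Omega(\eps''^2 p|A^\star|))$ for each fixed $A^\star$; an $\eps''$-net discretization of $U_2$ reduces the number of relevant witness types to $\exp(O(np))$, which is absorbed by the Chernoff tail since $np \gg \sqrt{n\log n}$. A uniform-in-$G$ bound of $\delta n/4$ on bad $v$ follows, and a symmetric argument for $(V_1, N_\Gamma(w, V_2))$ adds at most another $\delta n/4$ vertices to $B$.

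\emph{Two-sided inheritance and main obstacle.} Finally, a pair $(v, w)$ is bad for $(N_\Gamma(v, V_1), N_\Gamma(w, V_2))$ only if the joint $\Gamma$-adjacency of $\{v, w\}$ into $V_1 \cup V_2$ meets a structured $G$-determined subset of $V_1 \times V_2$ of size $\le \eps^2 |V_1||V_2|$ in an anomalous number of edges; a joint Chernoff bound combined with the same $\eps''$-net discretization produces a uniform-in-$G$ bound on the bad pair count of $n^2 \exp(-\Omega(\sqrt{n\log n}))$, from which it follows that at most $\delta n/4$ vertices appear in any bad pair, and these can be added to $B$. The principal technical difficulty throughout is precisely this uniformity in $G$: a direct union bound over subgraphs is infeasible, and the standard remedy, which I adopt, is to recast every failure event as an anomaly of $\Gamma$-neighborhoods intersecting $G$-determined witness subsets, of which only $\exp(O(np))$ "types" matter after discretization, so that the Chernoff concentration — strong enough thanks to the hypothesis $p \ge K\sqrt{\log n/n}$ — survives the resulting union bound.
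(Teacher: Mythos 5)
Your approach departs substantially from the paper's. The paper's proof of Lemma~\ref{lem:typical_vertices} rests on Lemma~\ref{lem:skoric_trujic} (a counting statement: for a fixed $(\eps,p)$-regular pair $(V_1,V_2)$ w.r.t.\ $G$, only a $\beta^{\min\{q_1,q_2\}}$ fraction of $(Q_1,Q_2)$-pairs fail to inherit regularity), together with an \emph{amortized} union bound: if the bad set $B$ had more than $\delta n$ vertices, one could greedily extract $\Omega(n)$ pairwise disjoint bad pairs $(v,w)$, and the probability that each associated bad $(N_v,N_w)$ simultaneously lies inside $N_\Gamma(v,V_1)\times N_\Gamma(w,V_2)$ is $p^{2|\cB|\tilde nd/4} = \exp(-\Omega(n^2 p\log(1/p)))$ — small enough to absorb the $2^{O(n^2p)}$ choices of $G$ restricted to $V_1\times V_2$. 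You instead try a ``one vertex at a time'' Chernoff argument, which is a genuinely different route.

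However, there is a real gap in your argument, concentrated in the claim that ``an $\eps''$-net discretization of $U_2$ reduces the number of relevant witness types to $\exp(O(np))$.'' The defect set $A^\star$ you wish to control is a $G$-dependent subset of $V_1$ of size up to $\Theta(\eps\tilde n)$, and the adversary chooses $G$ \emph{after} seeing $\Gamma$ (hence after seeing $N_\Gamma(v,V_1)$). For a fixed $\Gamma$, the family of admissible $G\subseteq\Gamma$ on $V_1\times V_2$ has size $2^{\Theta(n^2p)}$, and the associated defect sets $A^\star(U_2,G)$ range over essentially arbitrary subsets of $V_1$ of the right size — there are $\exp(\Theta(n))$ of them, not $\exp(O(np))$. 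Discretizing $U_2$ does not collapse this count, because $A^\star$ still depends on $G$. In particular, since $|N_\Gamma(v,V_1)|=\Theta(np)\ll\eps\tilde n$, nothing prevents the adversary from choosing $G$ so that $A^\star\supseteq N_\Gamma(v,V_1)$ for a given $v$; a per-vertex Chernoff bound $\exp(-\Omega(\eps''np))$ cannot survive a union bound over $\exp(\Theta(n))$ defect sets (let alone $2^{\Theta(n^2p)}$ choices of $G$). This is exactly the obstruction the paper overcomes by (i) invoking the Gerke--Kohayakawa--R\"odl--Steger / \v Skori\'c--Steger--Truji\'c bound so that the set of bad neighborhood-candidates $(N_v,N_w)$ is \emph{uniformly} small for every admissible $G$, and (ii) aggregating the failure event over $\Omega(n)$ disjoint pairs so that the probability gain $\exp(-\Omega(n\cdot np))$ beats the union bound over subgraphs $G\cap(V_1\times V_2)$. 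Without step (ii) or an equivalent amortization, the union bound over $G$ is not controlled, and I do not see how your discretization claim could be made precise.
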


To prove Lemma~\ref{lem:typical_vertices}, we need the following lemma of \v
Skori\'c, Steger, and Truji\'c~\cite{vskoric2018local}, itself based on an
earlier result of Gerke, Kohayakawa, R\"{o}dl, and Steger~\cite{gerke2007small}.
It enables us to say something about regularity inheritance for subsets of order
$o(n)$, a regime in which Lemma~\ref{lem:large-subset-reg-inheritance} tells us
nothing.

\begin{lemma}[Corollary~3.5 in \cite{vskoric2018local}]
  \label{lem:skoric_trujic}
  For all $0 < \beta, \eps', \gamma < 1$, there exist positive constants $\eps_0
  = \eps_0(\beta,\eps',\gamma)$ and $D = D(\eps')$ such that, for every $0 <
  \eps \leq \eps_0$ and $p = p(n) \in (0,1)$, the random graph $\Gamma \sim
  \Gnp$ w.h.p.\ has the following property. Suppose $G \subseteq \Gamma$ and
  $(V_1, V_2)$ is an $(\eps,p)$-regular pair of density $d \ge \gamma p$ with
  respect to $G$. Then, for all $q_1, q_2 \geq Dp^{-1}\log n$, there are at
  least
  \[
    \big(1-\beta^{\min\{q_1,q_2\}}\big) \binom{|V_1|}{q_1} \binom{|V_2|}{q_2}
  \]
  sets $Q_i \subseteq V_i$ of order $|Q_i| = q_i$, $i \in \{1, 2\}$, which induce
  an $(\eps',p)$-regular pair of density $(1 \pm \eps')d$ with respect to $G$.
\end{lemma}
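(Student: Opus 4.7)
The plan is to bound, by a union bound, the probability that a uniformly random pair $(Q_1, Q_2)$ with $|Q_i| = q_i$ fails the conclusion. There are two failure modes: the overall density $d_G(Q_1, Q_2)$ can drift away from $d$, or the pair can fail to be $(\eps', p)$-regular via a pair of large subsets. I would handle these separately and combine them.

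First I would show density concentration: $e_G(Q_1, Q_2) = (1 \pm \eps'/2) d |Q_1||Q_2|$ with probability at least $1 - \beta^{\min\{q_1, q_2\}}/2$. The $(\eps, p)$-regularity of $(V_1, V_2)$ in $G$ implies that all but at most $(\eps/\gamma)|V_i|$ vertices $v \in V_i$ satisfy $\deg_G(v, V_{3-i}) = (1 \pm \eps/\gamma) d |V_{3-i}|$. Two successive hypergeometric Chernoff bounds---one summing $\sum_{v \in Q_1} \deg_G(v, V_2)$ over the random choice of $Q_1$, and one summing $\sum_{v \in Q_1} \deg_G(v, Q_2)$ over the random choice of $Q_2$ conditional on $Q_1$---give the desired concentration; the lower bound $q_i \geq D p^{-1}\log n$ is used exactly to beat the exponential failure probability threshold with room to spare.

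For regularity, any $(Q_1, Q_2)$ satisfying the density bound but failing $(\eps', p)$-regularity must contain a witness $(U_1, U_2) \subseteq Q_1 \times Q_2$ with $|U_i| \geq \eps' q_i$ and $|d_G(U_1, U_2) - d| \geq \eps' p / 2$. Fixing witness sizes $u_i = \lceil \eps' q_i \rceil$, a union bound over candidate witness pairs gives
\begin{equation*}
  \Pr\bigl(\text{some bad witness is contained in } (Q_1, Q_2)\bigr) \;\leq\; \mathcal{N}_{\mathrm{bad}} \cdot \prod_{i=1}^2 \frac{\binom{|V_i| - u_i}{q_i - u_i}}{\binom{|V_i|}{q_i}},
\end{equation*}
where $\mathcal{N}_{\mathrm{bad}}$ is the number of $(U_1, U_2) \in \binom{V_1}{u_1} \times \binom{V_2}{u_2}$ with $|d_G(U_1, U_2) - d| \geq \eps' p / 2$. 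The binomial ratio factorises as $\prod_j (q_i - j)/(|V_i| - j) \le (q_i/|V_i|)^{u_i}$, and then standard binomial estimates reduce the whole expression to a bound of the form $(\mathcal{N}_{\mathrm{bad}} / \binom{|V_1|}{u_1}\binom{|V_2|}{u_2}) \cdot (e/\eps')^{u_1 + u_2}$; it thus suffices to show that the fraction of bad witnesses is exponentially small in $\min\{u_1, u_2\}$ with a large enough decay rate to absorb the $(e/\eps')^{u_1+u_2}$ factor.

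That fraction bound is the key input and the main obstacle: one needs $\mathcal{N}_{\mathrm{bad}} \leq \beta_0^{\min\{u_1, u_2\}} \binom{|V_1|}{u_1}\binom{|V_2|}{u_2}$ for a constant $\beta_0 = \beta_0(\eps', \gamma)$ that can be driven arbitrarily small by taking $\eps$ small. This is where the hypothesis $G \subseteq \Gamma \sim \Gnp$ is indispensable: the quasirandomness of $\Gamma$---in particular, the fact that every pair of subsets of size at least $D p^{-1}\log n$ has $\Gamma$-density $(1 \pm o(1))p$, which holds w.h.p.\ via Chernoff plus a union bound in the spirit of Lemma~\ref{lem:common-neighbours-expansion}---combined with the global $(\eps, p)$-regularity of $(V_1,V_2)$ controls the number of abnormally dense or sparse small subgraph pairs of $G$. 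Establishing this exponential tail bound on $\mathcal{N}_{\mathrm{bad}}$ is the technical heart of the approach of Gerke, Kohayakawa, R\"{o}dl, and Steger, and the rest of the argument (Chernoff plus binomial bookkeeping) is comparatively routine.
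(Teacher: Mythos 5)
The paper does not prove this lemma at all: it is imported verbatim as Corollary~3.5 of \v{S}kori\'c, Steger, and Truji\'c \cite{vskoric2018local}, which in turn rests on the hereditary-regularity machinery of Gerke, Kohayakawa, R\"odl, and Steger \cite{gerke2007small}. So there is no in-paper argument to measure your sketch against; the comparison has to be with the literature proof, which is a genuinely involved combinatorial counting argument. Your outline reproduces the correct high-level architecture (density concentration for a random pair, then a witness-containment union bound for irregularity), but it contains one gap you flag and one you do not, and together they leave the actual content of the lemma unproved.

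The gap you do not flag is the density step. ``Two successive hypergeometric Chernoff bounds'' cannot deliver failure probability $\beta^{\min\{q_1,q_2\}}$ for an \emph{arbitrary} constant $\beta\in(0,1)$, which is what the statement demands (and what the application in Lemma~\ref{lem:typical_vertices} uses, since $\beta$ is taken small there). The first application, concentration of $e_G(Q_1,V_2)$ over the choice of $Q_1$, can be salvaged using the ambient bound $\deg_\Gamma(v,V_2)\le 2p|V_2|$. The second cannot: once $Q_1$ is fixed, the summands $\deg_G(w,Q_1)$, $w\in V_2$, are controlled only in their average, not in their distribution. If for some $Q_1$ half of $V_2$ has degree $(1-2\eps')dq_1$ into $Q_1$ and half has degree $(1+2\eps')dq_1$, then at least $\binom{|V_2|/2}{q_2}=2^{-(1+o(1))q_2}\binom{|V_2|}{q_2}$ choices of $Q_2$ give density off by more than $\eps'd$, which already exceeds the allowed $\beta^{q_2}\binom{|V_2|}{q_2}$ whenever $\beta<1/2$; correspondingly, a Bernstein bound yields only $\exp(-c\eps'^2 q_2)$ with an absolute constant $c$, which does not beat $\beta^{q_2}$ for small $\beta$. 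One must therefore first show that all but a $\beta^{q_1}$-fraction of sets $Q_1$ have a well-concentrated degree sequence into $V_2$ --- but that is precisely a one-sided regularity-inheritance statement, so at this point your argument is circular. The gap you do flag, the bound $\mathcal{N}_{\mathrm{bad}}\le\beta_0^{\min\{u_1,u_2\}}\binom{|V_1|}{u_1}\binom{|V_2|}{u_2}$, is the same circularity in another guise: it is the density half of the lemma at scale $u_i=\eps'q_i$, so deferring it to ``the approach of GKRS'' defers the whole theorem. (A further technical issue: your union bound pays $(e/\eps')^{u_1+u_2}$ but gains only $\beta_0^{\min\{u_1,u_2\}}$, which does not close when $q_1$ and $q_2$ have different orders of magnitude; the literature proof avoids this by treating one side at a time.) The missing ingredient is the counting lemma of \cite{gerke2007small} (building on Kohayakawa and R\"odl), which bounds the number of $q$-sets $Q_1$ for which $(Q_1,V_2)$ fails to be lower-regular by a bespoke amplification argument rather than by concentration of measure; without supplying that, the proposal does not constitute a proof.
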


Intuitively, Lemma~\ref{lem:skoric_trujic} tells us that if we have an $(\eps,
p)$-regular pair $(V_1, V_2)$ with respect to a subgraph of $\Gnp$ and two
vertices $v$ and $w$ which have neighbourhoods of order $\Theta(np)$ in $V_1$
and $V_2$, respectively, then, for $p = \Omega((\log n/n)^{1/2})$, it is
extremely unlikely that these two neighbourhoods do not inherit regularity. In
fact, it is so unlikely that a union bound over all possible situations suffices
to prove Lemma~\ref{lem:typical_vertices}.

\begin{proof}[Proof of Lemma~\ref{lem:typical_vertices}]
  We only prove that there exists a $B$ which ensures that $(N_v, N_w)$ is
  always $(\eps', p)$-regular with the required density. The other case follows
  from similar calculations, so we omit it.

  For two disjoint subsets $V_1, V_2 \subseteq V(\Gamma)$ of order $\tilde n$,
  let $\cE(V_1, V_2)$ be the event that there exists $G \subseteq \Gamma$ for
  which the conclusion of the lemma fails for this particular choice of $V_1$
  and $V_2$. We will show that $\Pr[\cE(V_1, V_2)] \le \exp(-c\tilde n^2 p)$
  for some $c > 0$, which is more than enough to beat the union bound over all
  choices of $V_1$ and $V_2$.

  Suppose $G$ is a witness for $\cE(V_1, V_2)$. Consider the following process:
  start with $\cB = \emptyset$ and $B = \emptyset$ and, as long as there exist
  (not necessarily distinct) vertices $v, w \in V(G) \setminus (B \cup V_1 \cup
  V_2)$ which violate the desired property, add the pair $(v, w)$ to $\cB$ and
  add $v$ and $w$ to $B$. As soon as $|B|$ becomes at least $\delta \tilde n$, we stop the procedure. Note that the procedure must go on at least this long, by the
  choice of $G$. Then $\cB$ contains at least $\delta\tilde n/2$ and at most
  $\delta\tilde n$ pairs.

  Therefore, if $G$ is a witness for $\cE(V_1, V_2)$, then there exists a set of
  pairs of vertices $\cB$ of size $\delta\tilde n/2 \le |\cB| \le \delta\tilde
  n$ such that all elements of $\cB$ are pairwise disjoint and, for every $(v,
  w) \in \cB$, there exist $N_v \subseteq N_\Gamma(v, V_1)$ and $N_w \subseteq
  N_\Gamma(w, V_2)$, both of order $\tilde n d/4$, such that $(N_v, N_w)$ does
  not span an $(\eps', p)$-regular pair of density $(1 \pm \eps')d$ with respect
  to $G$. Let us bound the probability that such a configuration exists in
  $\Gnp$.

  We first expose the edges between $V_1$ and $V_2$ in $\Gamma$. Choose a subset
  $E' \subseteq E_\Gamma(V_1, V_2)$ of these edges such that $(V_1, V_2)$ is
  $(\eps, p)$-regular of density $d \geq \gamma p$ with respect to $E'$.
  Conditioning on $|E_\Gamma(V_1, V_2)| \le 2 \tilde n^2 p$ (which we may assume
  holds w.h.p.~by a standard application of Chernoff's inequality and the union
  bound), there are at most $2^{2 \tilde n^2 p}$ choices for $E'$. Next, there
  are at most $\delta\tilde n \cdot (n!)^2 < 2^{3 n \log n}$ choices for $\cB$.
  Finally, for each $(v, w) \in \cB$, choose the `bad' subsets $N_v \subseteq
  V_1$ and $N_w \subseteq V_2$ of order $x = \tilde nd/4$. As $(V_1,V_2)$
  satisfies the conclusion of Lemma~\ref{lem:skoric_trujic} and $\tilde n d/4
  \geq D_{\ref{lem:skoric_trujic}}(\eps') p^{-1}\log n$ by the assumption on $p$
  from the statement of the lemma and the fact that $K$ is sufficiently large,
  there are at most
  \[
    \bigg( \beta^x \binom{\tilde n}{x}^2 \bigg)^{|\cB|}
  \]
  such choices. But we also need that each such set $N_v$ lies in the
  neighbourhood of $v$ in $\Gamma$ and similarly for $N_w$ and $w$. Using the
  fact that any two pairs in $\cB$ are disjoint and, within each pair, we
  consider neighbours into disjoint sets $V_1$ and $V_2$, the probability of
  this happening is exactly $p^{2|\cB|x}$. Putting all this together, we get
  that
  \[
    \Pr[\cE(V_1, V_2)] \le 2^{2 \tilde n^2 p} \cdot 2^{3n \log n} \cdot
    \beta^{|\cB|\tilde n d / 4} \binom{\tilde n}{\tilde n d / 4}^{2|\cB|} \cdot
    p^{2|\cB| \tilde n d / 4}.
  \]
  Using the standard bound $\binom{a}{b} \le (\frac{ea}{b})^b$, we conclude that
  by taking $\beta$ sufficiently small we can make this be at most $\exp(-c
  \tilde n^2 p)$, as desired.
\end{proof}

\subsubsection{The K\L R conjecture in random graphs}

The following result from \cite{conlon2014klr} gives sufficient conditions for
the existence of a small, fixed graph between an appropriate collection of
$(\eps, p)$-regular pairs in a random graph. This result also follows from a
celebrated conjecture of Kohayakawa, \L uczak, and
R\"odl~\cite{kohayakawa1997klr} (the so-called \emph{K\L R conjecture}), which
was fully resolved by Balogh, Morris, and Samotij~\cite{balogh2015independent}
and, independently, Saxton and Thomason~\cite{saxton2015hypergraph} (though see
the recent paper~\cite{nenadov2021klr} for another proof). However, we will only
invoke the lemma when $H$ is either a cycle or $K_4$, both of which were known
before the full conjecture was proved (see~\cite{gerke2007small}
and~\cite{gerke2007k}, respectively).

\begin{lemma}[The K\L R conjecture]\label{lem:KLR}
  For every graph $H$ and every $\gamma > 0$, there exist $\eps_0, K > 0$ such
  that, for every $0 < \eps \leq \eps_0$ and $p \ge Kn^{-1/m_2(H)}$, where
  \[
    m_2(H) = \max\Big\{ \frac{e(F)-1}{v(F)-2} : F \subseteq H, v(F) \ge 3
    \Big\},
  \]
  the random graph $\Gamma \sim \Gnp$ w.h.p.\ has the following property.

  Suppose $G \subseteq \Gamma$ and $\{V_i\}_{i \in V(H)}$ is a family of
  disjoint subsets of $V(G)$, each of order $\tilde n \ge \max\{ (K/p)^{m_2(H)},
  K\log n/p \}$. Suppose also that, for each $ij \in H$, the pair $(V_i, V_j)$
  is $(\eps, p)$-regular of density at least $\gamma p$ with respect to $G$.
  Then there exists a copy of $H$ in $G$ which maps each vertex $i$ to $V_i$.
\end{lemma}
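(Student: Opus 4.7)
The plan is to invoke the K\L R conjecture, now a theorem, essentially as a black box. The statement of Lemma~\ref{lem:KLR} is exactly the ``at least one canonical copy'' form of the K\L R conjecture, specialised to the setting where all vertex classes have the same size $\tilde n$. One clean route is to cite the full resolution due to Balogh, Morris and Samotij~\cite{balogh2015independent} or Saxton and Thomason~\cite{saxton2015hypergraph} (or \cite{nenadov2021klr}) and derive the stated form by combining the K\L R property for $H$ with the embedding lemma. However, as the authors note, only the cases $H = C_k$ for $k \ge 3$ and $H = K_4$ will ever be invoked, and for these one can appeal to the earlier, more hands-on proofs of Gerke, Kohayakawa, R\"odl and Steger~\cite{gerke2007small} (cycles) and of Gerke et al.~\cite{gerke2007k} ($K_4$), which predate the container method.

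For a self-contained sketch one would proceed as follows. Fix $H$, disjoint classes $\{V_i\}_{i \in V(H)}$ of size $\tilde n$, and call a configuration of bipartite graphs $\{G_{ij}\}_{ij \in H}$ with $G_{ij} \subseteq \Gamma[V_i,V_j]$ \emph{bad} if each pair $(V_i, V_j)$ is $(\eps,p)$-regular of density at least $\gamma p$ in $G_{ij}$ yet no canonical copy of $H$ (with vertex $i$ in $V_i$) appears. The heart of the argument is a counting lemma: the number of bad configurations is a factor of $\exp(-c\tilde n^2 p)$ smaller than the total number of configurations of similar edge density, provided $p \ge Kn^{-1/m_2(H)}$. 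For cycles this is the content of~\cite{gerke2007small}, and for $K_4$ of~\cite{gerke2007k}; in general, this is precisely what the container theorems give. Multiplying by the probability $p^{e(H)\tilde n^2 d/(\gamma p)}$ that $\Gamma$ realises any particular such configuration and union-bounding over the (at most $2^{O(n^2 p)}$ many) subgraphs $G \subseteq \Gamma$, over the at most $n^{v(H)}$ choices of the tuple $(V_1, \ldots, V_{v(H)})$, and over choices of $E' \subseteq E_\Gamma(V_i, V_j)$ witnessing the regular pair, shows that w.h.p.\ no bad configuration exists in $\Gnp$.

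The main technical obstacle is the counting lemma for bad configurations, and the exponent $1/m_2(H)$ is forced by it: the lower bound $p \ge Kn^{-1/m_2(H)}$ ensures that the expected number of canonical copies of $H$ between classes of size $\tilde n$ with densities $\Theta(p)$ dominates the variance, so that only an exponentially small fraction of configurations can avoid such a copy. Below this threshold even $\Gnp$ itself typically fails to contain a copy of $H$, so the bound is tight. The remaining step, packaging the output as a statement uniform over all subgraphs $G \subseteq \Gamma$, is the standard strengthening provided by the container/deletion approach and is exactly the form used in the sequel.
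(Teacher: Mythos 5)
Your high-level identification is right: the lemma is a consequence of the resolved K\L R conjecture, and for the uses in this paper only the cases $H = C_k$ and $H = K_4$ are needed, which predate the container theorems. However, the self-contained sketch you give misses the actual technical content of the lemma, which is precisely what separates it from the off-the-shelf statement.

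The standard form of the theorem (e.g.\ Theorem~1.6 of \cite{conlon2014klr}) concerns a \emph{partition} of the whole vertex set of $\Gnp$ into $v(H)$ parts of size $n/v(H)$, and gives failure probability $\exp(-bn^2p)$. The lemma here is stronger: it must hold simultaneously for \emph{every} choice of disjoint sets $V_1,\ldots,V_{v(H)}$ of order $\tilde n$, where $\tilde n$ can be as small as $\max\{(K/p)^{m_2(H)}, K\log n/p\}$, i.e.\ $o(n)$. Your union bound ``over the at most $n^{v(H)}$ choices of the tuple $(V_1,\ldots,V_{v(H)})$'' is not correct --- the number of such tuples is on the order of $\binom{n}{\tilde n}^{v(H)}$ --- and more importantly your sketch never explains how one gets a version of the theorem for classes of sublinear size in the first place. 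The paper's proof does this by applying the standard result to the induced subgraph $\Gamma[S]$ for $S = V_1\cup\cdots\cup V_{v(H)}$, using that $\Gnp[S]\sim G_{|S|,p}$; the constraint $\tilde n \ge (K/p)^{m_2(H)}$ is exactly what ensures $p$ is still above the $m_2$-threshold relative to $|S|$, and the constraint $\tilde n \ge K\log n/p$ is what makes the resulting probability $\exp(-b|S|^2 p) \le n^{-2|S|}$ small enough to beat the union bound over all subsets $S$ of that order. Your sketch neither uses nor mentions the second term in the $\max$, which is not an optional refinement but essential to the statement being provable. The heuristic you offer for why $m_2(H)$ appears (expected count of canonical copies dominates the variance) is also not what drives the proof; the correct mechanism is the exponential counting bound for ``bad'' configurations coming from containers (or, for cycles and $K_4$, from the earlier ad hoc arguments).
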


\begin{proof}[Sketch of the proof]
  Theorem 1.6 in \cite{conlon2014klr} gives $K > 1$ such that the conclusion of
  the lemma holds with probability at least $1 - \exp(- b n^2 p)$ in the special
  case where $\tilde n = n/v(H)$, for some constant $b > 0$ depending on $H$ and
  $\gamma$. As every subgraph of $\Gnp$ with $s$ vertices is distributed as
  $G_{s, p}$, we can apply the above to such a subset as long as $p \ge K
  s^{-1/m_2(H)}$ or, equivalently, $s \geq (K/p)^{m_2(H)}$. In particular, a
  subgraph of $\Gnp$ induced by a vertex subset $S$ of order $s \ge
  (K/p)^{m_2(H)}$ fails to have the required property in the case $\tilde n = s
  / v(H)$ with probability at most $\exp(-b s^2p)$. By the assumption $\tilde n
  \ge K\log n/p$, and as we may assume $K$ is sufficiently large in terms of
  $b$, this probability is at most $n^{-2s}$, which is sufficient to take a
  union bound over all possible choices of $S$. As every $(\eps, p)$-regular
  constellation with each set of order $\tilde n$ naturally lies in a subset $S$
  of order $v(H) \tilde n$, the claimed statement follows.
\end{proof}

\section{Trees and cycles through prescribed sets}
\label{sec:strategy-trees}

In this section, we provide the two main building blocks used in the proof of
Theorem~\ref{thm:main_gnp}. Both results are stated in greater generality that
what is needed for Theorem~\ref{thm:main_gnp}, with the goal of making further
claims in Section~\ref{sec:concluding} more transparent.

Our first building block says that, under appropriate conditions, one can embed
bounded-degree trees so that each vertex is mapped into a prescribed set. The
embedding strategy used both here and in the main theorem, which we dub
\emph{first-free-bucket embedding}, originates from the second author's PhD
thesis~\cite{rajko_thesis}.  While the bound on $p$ here could be lowered to,
say, $n^{-1/2} \log^3 n$, we have decided not to complicate the proof any
further.

\begin{lemma}\label{lem:tree-embedding-lemma}
  For every $\alpha, \gamma, \xi > 0$ and $\Delta \in \N$, there exist $c =
  c(\alpha,\gamma,\xi,\Delta), \eps_0 = \eps_0(\gamma,\Delta) > 0$ such that,
  for every $0 < \eps \leq \eps_0$ and $p \geq n^{-1/2+\xi}$, the random graph
  $\Gamma \sim \Gnp$ w.h.p.\ has the following property.

  Let $T$ be a tree with $t \leq cn$ vertices and maximum degree $\Delta$. Let
  $G \subseteq \Gamma$ and, for each $v \in V(T)$, let $s_v \in V(G)$ be a
  specific vertex such that every $s \in V(G)$ is chosen at most $\Delta$ times.
  Then, for any collection of subsets $N_v \subseteq N_G(s_v)$ of order $\alpha
  n p$ such that $(N_v, N_w)$ is $(\eps, p)$-regular of density $d \ge \gamma p$
  (with respect to $G$) for each $vw \in T$, there exists a copy of $T$ in $G$
  which maps each $v \in V(T)$ to $N_v$.
\end{lemma}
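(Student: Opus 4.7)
The plan is to embed $T$ greedily along a DFS traversal, maintaining candidate sets for the unembedded vertices. I begin by invoking Lemma~\ref{lem:cleanup} with $H = T$ on the family $\{N_v\}_{v \in V(T)}$, obtaining subsets $N_v' \subseteq N_v$ with $|N_v'| \geq (1 - \Delta\eps)|N_v|$ such that every $x \in N_v'$ has at least $d|N_w|/2 \geq (\gamma \alpha/2) np^2$ neighbours in $N_w'$ for every $vw \in E(T)$. In particular, once $v$ is embedded at some $\phi(v) \in N_v'$, each unembedded neighbour of $v$ starts with a candidate set of order $\Omega(np^2)$.

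Next I fix an arbitrary root $r \in V(T)$, set $\phi(r)$ to any vertex of $N_r'$, and process the remaining vertices in DFS order. When it is time to embed $v$, its parent $u$ in $T$ has already been mapped to some $\phi(u) \in N_u'$, and the set of valid images is
\[
  C_v \;:=\; N_G(\phi(u)) \cap N_v' \setminus S,
\]
where $S$ is the set of already-used host vertices; I pick $\phi(v)$ uniformly at random from $C_v$. The cleanup guarantees $|N_G(\phi(u)) \cap N_v'| \geq (\gamma \alpha/2) np^2$, so success reduces to bounding the ``interference'' $|S \cap N_G(\phi(u)) \cap N_v'|$ at every step.

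For any previously embedded $w$ contributing to this interference, the inclusion $\phi(w) \in N_w \subseteq N_\Gamma(s_w)$ together with $\phi(w) \in N_v' \subseteq N_\Gamma(s_v)$ and $\phi(w) \in N_\Gamma(\phi(u))$ forces $\phi(w)$ to lie in the triple common neighbourhood $N_\Gamma(\phi(u)) \cap N_\Gamma(s_v) \cap N_\Gamma(s_w)$, which in $\Gnp$ has size $O(np^3 + \log n)$ w.h.p.\ (by Lemma~\ref{lem:common-neighbours-expansion} with $d = 3$ together with a standard Chernoff-plus-union-bound refinement to cover the regime $np^3 = O(\log n)$). The hypothesis that each host vertex is $s_v$ for at most $\Delta$ tree vertices absorbs the degenerate cases in which two of $\phi(u), s_v, s_w$ coincide. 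A Hoeffding-type concentration applied to the random greedy choices $\phi(w) \in C_w$ then shows that, provided $c = c(\alpha, \gamma, \xi, \Delta)$ is chosen small enough, $|S \cap C_v| \leq (\gamma \alpha/4) np^2$ simultaneously for every $v$, so $|C_v| \geq (\gamma \alpha/4) np^2 > 0$ and the greedy procedure never gets stuck.

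The main obstacle is this interference bound: since $|C_v| = \Theta(np^2)$ is a factor of $p$ smaller than $|N_v'| = \Theta(np)$, no constant fraction of the host can be wasted, and the argument must combine the sharp random-graph common-neighbourhood estimates with the $\Delta$-multiplicity constraint on the $s_v$'s and concentration for the randomised greedy in order to keep every candidate set viable throughout the $t \leq cn$ embedding steps. The slack $p \geq n^{-1/2 + \xi}$ is used precisely here, to ensure that $np^2$ grows fast enough to absorb the cumulative interference.
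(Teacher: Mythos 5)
Your opening move (apply Lemma~\ref{lem:cleanup} to get $|N_G(\phi(u)) \cap N_v'| = \Omega(np^2)$ for the parent $u$ of each $v$) is the same as the paper's, but from there the two proofs diverge, and your version has a genuine gap in the interference estimate that I do not think can be repaired along the lines you sketch.

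The problem is quantitative. You want to show that when $v$ is reached, at most a small fraction of the candidate set $N_G(\phi(u)) \cap N_v'$ (size $\Theta(np^2)$) has already been occupied by earlier images $\phi(w)$. Your bound for a single $w$ is: $\phi(w)$ must lie in $\hat N_\Gamma(\{\phi(u),s_v,s_w\})$, a set of size $O(np^3+\log n)$, while $\phi(w)$ is chosen uniformly from $C_w$ of size $\Theta(np^2)$, so $\Pr[\phi(w)\in N_G(\phi(u))\cap N_v']\lesssim p + \log n/(np^2)$. Summing over the up-to-$t=cn$ earlier vertices $w$, the \emph{expected} interference is already $\Theta(cnp)$, which exceeds the candidate set size $\Theta(np^2)$ by a factor $c/p\to\infty$. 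Concentration (Hoeffding or otherwise) cannot rescue an argument whose mean is already an order of magnitude too big. The deterministic reading of your triple-common-neighbourhood bound fares no better, because the triple $\hat N_\Gamma(\{\phi(u),s_v,s_w\})$ varies with $w$, so the size bound on a single such set does not directly cap the number of interfering $w$'s. Even tightening to the quadruple $\hat N_\Gamma(\{\phi(u),s_v,s_w,\phi(a_w)\})$ (which you implicitly could, since $\phi(w)\in N_G(\phi(a_w))$) gives expected interference $\Theta(cnp^2 + c\log n/p^2)$, whose second term still dominates $np^2$ whenever $\xi<1/4$. In short, a single-candidate-set greedy — randomised or not — runs out of room.

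What the paper does instead is essentially designed to evade this exact obstruction. It randomly partitions $\bigcup_v N_v$ into $z+1=\lceil 1/\xi\rceil+1$ buckets $U_0,\dotsc,U_z$, refines the minimum-degree bound so each potential image still has $\Omega(np^2/z)$ candidates inside every single bucket, and then embeds greedily by the \emph{first-free-bucket} rule: put $v_i$ into the smallest $j$ for which $N_G(\phi(a_i),V_i^j)\setminus\phi(T^{<i})\ne\varnothing$. The key counting claim, proved deterministically via Lemma~\ref{lem:common-neighbours-expansion}, is that the occupancies $X_j$ of the buckets decay geometrically: $|X_j|\lesssim c_j n/(np^2)^j$. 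This is exactly because a vertex only spills over to bucket $j$ when its entire $\Omega(np^2)$-sized candidate set in bucket $j-1$ was already occupied, and after disjointifying (using the $\Delta$-multiplicity bound on the $s_v$'s) these candidate sets are almost disjoint, forcing $|X_{j-1}|\gtrsim |X_j| np^2/\Delta$. Since $(np^2)^z > n^2$ for $p\ge n^{-1/2+\xi}$, one gets $X_z=\varnothing$, so the greedy never terminates early. This multi-bucket/geometric-decay mechanism is the missing idea in your proposal; without it, the uniform-random greedy fails at the step you identified as ``the main obstacle.''
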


\begin{proof}
  By applying Lemma~\ref{lem:cleanup} and renaming the sets if necessary, we may
  assume that each vertex in $N_v$ has at least $d|N_w|/2$ neighbours in $N_w$
  for each $vw \in T$ and $|N_v| \ge \tilde n := 0.9 \alpha np$.

  Let $U = \bigcup_{v \in V(T)} N_v$ and take an equipartition $U = U_0 \cup U_1
  \cup \dotsb \cup U_z$ uniformly at random, where $z = \ceil{1/\xi}$. Let
  $N_v^j := U_j \cap N_v$ for all $v \in V(T)$ and $j \in \{0, \dotsc, z\}$. By
  Chernoff's inequality and the union bound, there exists a choice for the $U_j$
  such that every vertex in $N_v$ has at least $d|N_w|/(4z)$ neighbours in each
  $N_w^j$ for all $vw \in T$. In other words, for every $u \in N_v$, we have
  \begin{equation}\label{eq:min_degree}
    |\hat N_G(\{u, s_w\}, N_w^j)| = |N_G(u, N_w^j)| \ge d \tilde n/(4z).
  \end{equation}

  Let $\{v_1, \dotsc, v_t\}$ be an ordering of the vertices of $T$ such that
  each $v_i$, for $i \ge 2$, has exactly one preceding neighbour and denote this
  neighbour by $a_i$. For brevity, rename $N_{v_i}^j$ as $V_i^j$ and $s_{v_i}$
  as $s_i$ and, for every $i \in [t]$, let $T^{< i} := \{v_1, \dotsc,
  v_{i-1}\}$. We construct the desired copy of $T$ by defining an embedding
  $\phi$ as follows:
  \begin{enumerate}[label=(E\arabic*), leftmargin=3em]
    \item assign $\phi(v_1) := x$ for an arbitrary vertex $x \in V_1^0$,
    \item\label{path-embedding-step} for every $i \geq 2$, sequentially, take an
      arbitrary vertex $x$ from the first (smallest $j \in \{0, \dotsc, z\}$)
      non-empty set $N_G(\phi(a_i), V_i^j) \setminus \phi(T^{< i})$ and assign
      $\phi(v_i) := x$, or
    \item\label{terminate} if all such sets are empty, terminate.
  \end{enumerate}
  If the process never reaches~\ref{terminate}, the embedding $\phi$ gives the
  desired copy of $T$. Therefore, suppose towards a contradiction that there is
  some $i \in [t]$ for which the process enters~\ref{terminate}, that is, such
  that $N_G(\phi(a_i), V_i^j) \setminus \phi(T^{< i}) = \varnothing$ for all $j
  \in \{0, \dotsc, z\}$.

  Let $X_j := U_j \cap \phi(T^{< i})$, for all $j \in \{0, \dotsc, z\}$, be
  the set of vertices in $U_j$ `taken' by the images of $\{v_1, \dotsc, v_{i -
  1}\}$ under $\phi$. We claim that
  \begin{equation}\label{eq:star}
    |X_j| \leq \frac{c_j n}{(np^2)^j}
    \tag{$\star$}
  \end{equation}
  for constants $c_j$ with $c_0 = c = \alpha^2\gamma^2/(10^6\Delta z^2)$ and
  $c_j = 400 \Delta z c_{j-1}/(\alpha\gamma)$ for all $j \geq 1$.

  Under this assumption, since $(np^2)^z \geq n^2$, we get that $X_z =
  \varnothing$ and thus $N_G(\phi(a_i), V_i^z) \setminus \phi(T^{< i}) \neq
  \varnothing$, contradicting our assumption that~\ref{terminate} happens at
  step $i$. Therefore, in order to complete the proof, it only remains to show
  \eqref{eq:star}.

  Clearly, $|X_0| \leq c_0n$ as $t \leq c n$. Consider now the smallest $j \in
  [z]$ for which \eqref{eq:star} is violated. Without loss of generality, we may
  assume that $|X_j| = \ceil{c_j n/(np^2)^j}$ (if not, we can take a subset of $X_j$
  of precisely that order). Let $I \subseteq [i-1]$ be the set of indices of
  vertices $v_k \in T^{< i}$ that are embedded into $X_j$, i.e.,
  \[
    I := \{ k \in [i-1] : \phi(v_k) \in X_j \}.
  \]
  Since $\Delta(T) \leq \Delta$ and each $s_k \in V(G)$ is chosen at most
  $\Delta$ times, there exists (by greedily taking indices) $I' \subseteq I$ of
  size at least $|I|/(4\Delta)$ such that the sets $\{\phi(a_k), s_k\}$ are
  pairwise disjoint for all $k \in I'$. For simplicity in our notation, we will
  assume that $I$ already has this property, noting that $|I| \geq
  |X_j|/(4\Delta)$. For every $k \in I$, we set $\cP_k = \{\phi(a_k), s_k\}$.

  Since, for every $k \in I$, the vertex $v_k$ is embedded into $X_j$ and not
  $X_{j - 1}$, we have
  \[
    |X_{j - 1}| \geq \Big| \bigcup_{k \in I} \hat N_G(\cP_k, V_k^{j - 1}) \Big|.
  \]
  Our goal is to show that this implies $X_{j-1}$ is larger than what is claimed
  in \eqref{eq:star}, contradicting our assumption that $j$ is the smallest
  index violating \eqref{eq:star}.

  As $\hat N_G(\cP_k, V_k^{j - 1}) \subseteq \hat N_G(\cP_k) \subseteq \hat
  N_\Gamma(\cP_k)$, we have
  \begin{equation}\label{eq:j-large-mess}
    \Big| \bigcup_{k \in I} \hat N_G(\cP_k, V_k^{j - 1}) \Big| \geq \Big|
    \bigcup_{k \in I} \hat N_\Gamma(\cP_k) \Big| - \Big| \bigcup_{k \in I}
    \big(\hat N_\Gamma(\cP_k) \setminus \hat N_G(\cP_k, V_k^{j - 1})\big) \Big|.
  \end{equation}
  Note that $|I| \leq |X_j|$ and that $|X_j| \leq \lceil c_1/p^2 \rceil$ for $j
  = 1$ and $|X_j| = o(1/p^2)$ for $j \geq 2$. Hence, by
  Lemma~\ref{lem:common-neighbours-expansion} with $d = 2$, $\delta = 2 c_1$,
  and $\cP = \{\cP_k\}_{k \in I}$, the first term on the right-hand side of
  \eqref{eq:j-large-mess} can be bounded by
  \[
    \Big| \bigcup_{k \in I} \hat N_\Gamma(\cP_k) \Big| \geq (1 - \delta)|I|np^2.
  \]
  Lemma~\ref{lem:common-neighbours-expansion} also gives $|\hat N_\Gamma(\cP_k)|
  \leq (1 + \delta)np^2$. Combining the previous two bounds with
  \eqref{eq:min_degree}, we get that
  \[
    |X_{j - 1}| \geq (1 - \delta) |I| np^2
    - |I| \big( (1 + \delta)np^2 - d \tilde n / (4z) \big) \geq
    |I| d \tilde n / (8z) > |I| \alpha\gamma np^2/(100z),
  \]
  where we used that $c_1 \leq \alpha\gamma/(300z)$. Finally, from $|I| \geq
  |X_j|/(4\Delta)$ and $|X_j| = \ceil{c_j n/(np^2)^j}$, we arrive at
  \begin{equation*} \label{eq:contradiction}
    |X_{j-1}| > \frac{c_j n}{4\Delta (np^2)^j} \cdot \frac{\alpha\gamma
    np^2}{100z} \geq \frac{c_{j-1} n}{(np^2)^{j-1}},
  \end{equation*}
  which contradicts our assumption that $j$ was the smallest index violating
  \eqref{eq:star}.
\end{proof}

The following lemma gives the same statement when $T$ is a cycle $C_t$ instead
of a tree. Here, the bound on $p$ comes from the fact that we apply
Lemma~\ref{lem:KLR} with $\tilde n = \Theta(np)$, for which we need $p =
\Omega\big((np)^{-1/m_2(C_t)}\big)$.

\begin{lemma}\label{lem:cycle-embedding-lemma}
  For every $\alpha, \gamma > 0$ and $\ell \ge 3$, there exist $\eps_0 = \eps_0
  (\gamma, \ell), c = c(\alpha, \gamma, \ell), K = K(\alpha,\gamma,\ell) > 0$
  such that the statement of Lemma~\ref{lem:tree-embedding-lemma} holds if $T$
  is a cycle of length $t \in [\ell, cn]$ and $p \ge Kn^{-(\ell-2)/(2\ell-3)}$.
\end{lemma}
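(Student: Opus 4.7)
The plan is to deduce Lemma~\ref{lem:cycle-embedding-lemma} as a direct corollary of Lemma~\ref{lem:KLR} applied to $H = C_t$. First I would invoke Lemma~\ref{lem:KLR} with $H = C_t$, density parameter $\gamma$, and the candidate sets $\{N_v\}_{v \in V(C_t)}$ supplied by the hypothesis. The $(\eps, p)$-regularity of each consecutive pair $(N_v, N_w)$ for $vw \in C_t$ is exactly what is required, and the extra structure inherited from Lemma~\ref{lem:tree-embedding-lemma}'s setup plays no active role in the argument: since any embedding produced by Lemma~\ref{lem:KLR} sends $v$ into $N_v \subseteq N_G(s_v)$, the constraint that $\phi(v)$ is adjacent to $s_v$ is automatic, and the $\Delta$-fold multiplicity bound on the $s_v$'s is never invoked.

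The substantive content of the proof is then simply the verification that the size condition of Lemma~\ref{lem:KLR} is met. Substituting $\tilde n = \alpha n p$ and $m_2(C_t) = (t-1)/(t-2)$ into $\tilde n \ge (K_0/p)^{m_2(C_t)}$, where $K_0 = K_0(\gamma, \ell)$ is the KLR constant at density $\gamma$, this rearranges to
\[
  p \ge (K_0/\alpha)^{(t-1)/(2t-3)}\, n^{-(t-2)/(2t-3)}.
\]
The exponent $(t-2)/(2t-3) = 1/(2 + 1/(t-2))$ is strictly increasing in $t$, so $n^{-(t-2)/(2t-3)}$ is maximised over $t \in [\ell, cn]$ at $t = \ell$; the prefactor $(K_0/\alpha)^{(t-1)/(2t-3)}$ is likewise controlled by its value at $t = \ell$, since $(t-1)/(2t-3)$ is strictly decreasing in $t$. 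Taking $K = K(\alpha, \ell)$ sufficiently large therefore makes the hypothesis $p \ge K n^{-(\ell-2)/(2\ell-3)}$ imply the KLR size condition uniformly for every $t \in [\ell, cn]$. The remaining hypotheses of Lemma~\ref{lem:KLR}, namely $p \ge K_0 n^{-1/m_2(C_t)}$ and $\tilde n \ge K_0 \log n / p$, are strictly weaker in this regime and are therefore automatic.

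Lemma~\ref{lem:KLR} then returns a copy of $C_t$ in $G$ mapping each $v \in V(C_t)$ into $N_v$, which is exactly the conclusion of the lemma. I do not foresee any conceptual obstacle; the only work will be the bookkeeping of constants — choosing $\eps_0$ small enough for the KLR hypothesis, $K$ large enough so that the KLR size condition holds for every $t \in [\ell, cn]$, and $c$ small enough that the target cycle fits inside the host graph — and this is routine.
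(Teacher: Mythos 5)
Your proposal breaks down at two related points, both traceable to not noticing that Lemma~\ref{lem:KLR} requires the family $\{V_i\}_{i\in V(H)}$ to be pairwise \emph{disjoint}, whereas the candidate sets $N_v$ supplied by the hypotheses of Lemma~\ref{lem:tree-embedding-lemma} are only disjoint when $vw$ is an edge of $T$ (this is needed for $(\eps,p)$-regularity of $(N_v,N_w)$ even to be defined). Non-adjacent vertices of the cycle may share a candidate set: in the application inside Theorem~\ref{thm:main_gnp}, every $N_v$ lives inside one of only $10$ host sets $V_i^a$, so a long cycle will have enormous overlap among non-adjacent $N_v$. You therefore cannot feed the $N_v$ directly into Lemma~\ref{lem:KLR}.

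The natural fix is to pass to pairwise disjoint subsets $N_v'\subseteq N_v$, but this is where the second and more serious problem appears. To make $t$ sets disjoint one can only afford $|N_v'|\approx |N_v|/t = \alpha np/t$. The slicing lemma (Lemma~\ref{lem:large-subset-reg-inheritance}) turns $(\eps,p)$-regularity of $(N_v,N_w)$ into $(\eps/\eps_2,p)$-regularity of $(N_v',N_w')$ with $\eps_2 = \Theta(1/t)$, i.e.\ regularity parameter $\Theta(t\eps)$. Since $\eps_0$ in the lemma you are proving is a constant depending only on $\gamma,\xi,\Delta$ and may not depend on $t$, and $t$ may range all the way up to $cn$, the regularity parameter $\Theta(t\eps)$ exceeds the threshold needed for Lemma~\ref{lem:KLR} once $t$ is larger than a constant. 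Independently, for $t$ of order $n$, $|N_v'|\approx \alpha p$ fails the size requirement $\tilde n\geq (K/p)^{m_2(C_t)}$ outright. So your argument proves the statement only for $t$ bounded by a constant, not for all $t\in[\ell,cn]$. This is exactly why the paper splits into two cases: for $t\le 4\ell$ it does precisely your disjointification-plus-KLR argument (you should make the disjointification explicit even there), while for $t>4\ell$ it runs a connecting argument, choosing a vertex $v\in N_{v_1}'$ from which short $\ell$-paths reach almost all of $N_{v_{\ell+1}}'$ and $N_{v_{t-\ell+1}}'$, and then embedding the long middle arc via Lemma~\ref{lem:tree-embedding-lemma} before closing the cycle.
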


\begin{proof}[Sketch of the proof.]
  We first deal with the case where $T$ is a cycle of length $t \in
  [\ell,4\ell]$. Greedily take $N_v' \subseteq N_v$, each of order $|N_v'| =
  |N_v| / (2t)$, such that they are pairwise disjoint. By
  Lemma~\ref{lem:large-subset-reg-inheritance}, these sets are $(\eps',
  p)$-regular with density close to $d$ and so Lemma~\ref{lem:KLR} implies the
  existence of the desired copy of $T$.

  The case $t > 4\ell$ requires a bit more work. Let us denote the vertices of
  $T$ by $v_1, \dotsc, v_t$, in the natural order. Again, for each $i \in \{1,
  \dotsc, \ell+1\} \cup \{t, \dotsc, t-\ell+1\}$, choose a subset $N_{v_i}'
  \subseteq N_{v_i}$ of order $|N_{v_i}'| = |N_{v_i}|/(4\ell)$ such that they are
  pairwise disjoint. Let $V'$ be the union of all these sets and, for every
  other $v$, set $N_v' = N_v \setminus V'$. As
  $(np^2)^\ell \gg np$ by the assumption on $p$, one can expect that the
  $\ell$-th neighbourhood of each vertex $v \in N_{v_1}'$ contains almost all
  vertices in both $N_{v_{\ell+1}}'$ and $N_{v_{t-\ell+1}}'$. A minor
  modification of \cite[Corollary~2.5]{draganic2020size} gives precisely this.
  Namely, it shows that there exists a vertex $v \in N_{v_1}'$ such that, for
  all but $\eps |N_{v_{\ell+1}}'|$ vertices $w \in N_{v_{\ell+1}}'$, there
  exists a path from $v$ to $w$ (in $G$) with one vertex in each of $N_{v_2}',
  \dotsc, N_{v_\ell}'$ and similarly for $N_{v_{t-\ell+1}}'$ with paths through
  $N_{v_t}', \dotsc, N_{v_{t-\ell+2}}'$. Let us denote the set of such vertices
  reachable from $v$ by $N_{v_{\ell+1}}''$ and $N_{v_{t-\ell+1}}''$. Now we can
  apply Lemma~\ref{lem:tree-embedding-lemma} to find a path with one vertex in
  each of $N_{v_{\ell+1}}'', N_{v_{\ell+2}}', \dotsc, N_{v_{t-\ell}}',
  N_{v_{t-\ell+1}}''$. Together with the paths to $v$ from $N_{v_{\ell+1}}''$
  and $N_{v_{t-\ell+1}}''$, this forms the desired cycle.
\end{proof}

\section{Proof of Theorem~\ref{thm:main_gnp}}
\label{sec:main-proof}

Following the overview in Section~\ref{sec:proof_overview}, we first prove
a decomposition result for cubic graphs.

\begin{lemma} \label{lem:decomposition}
  Let $H$ be a connected cubic graph which is not isomorphic to $K_4$. Then
  there exists a partition $V(H) = B_1 \cup \dotsb \cup B_t$, for some $t$ which
  depends on $H$, such that the following hold:
  \begin{itemize}
    \item the subgraph of $H$ induced by each $B_i$ is either a path (of any
      length, even $0$) or a cycle of length at least four;
    \item for every $i \in \{2, \dotsc, t\}$, each vertex in $B_i$ has at most
      one neighbour in $B_{1} \cup \dotsb \cup B_{i-1}$.
    \end{itemize}
\end{lemma}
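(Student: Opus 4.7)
My plan is to construct the block ordering in \emph{reverse}: I iteratively peel blocks $B_t, B_{t-1}, \ldots, B_1$ off $H$ so that, at the moment each block $B_i$ is removed, every vertex of $B_i$ has at most one neighbour in the still-remaining graph. Since the still-remaining graph at that moment coincides with $B_1 \cup \cdots \cup B_{i-1}$ in the final ordering, this local peeling condition is exactly the required 1-degeneracy condition.

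The peeling proceeds in two phases. \emph{Phase 1.} Let $\mathcal{C} = \{C_1, \ldots, C_s\}$ be a maximal family of vertex-disjoint induced cycles of length at least four in $H$; these form the last $s$ blocks, in any internal order. As $H$ is cubic and each $C_i$ is induced, every vertex of $C_i$ has two neighbours inside $C_i$ and exactly one outside, so the peeling condition holds automatically. Let $H' = H \setminus \bigcup_i V(C_i)$: by maximality of $\mathcal{C}$, $H'$ contains no induced cycle of length $\geq 4$ and is thus chordal; and since $H$ is connected, cubic, and not $K_4$, it contains no $K_4$-subgraph (such a $K_4$ would saturate all three neighbours of each of its vertices and therefore form a whole component of $H$), so $H'$ has maximum clique size $3$ and maximum degree $3$. \emph{Phase 2.} From the current subgraph $G \subseteq H'$ I peel a singleton $\{v\}$ whenever some $v$ has $\deg_G(v) \leq 1$, and otherwise (when $G$ has minimum degree $\geq 2$) an induced path. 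To build such a path I take a simplicial vertex $v$ of $G$ -- which must have exactly two neighbours forming an edge, because three mutually adjacent neighbours together with $v$ would be a $K_4$ -- and grow an induced path from $v$, adding one neighbour at a time, stopping as soon as the current endpoint has degree $\leq 2$ in $G$. Endpoints of degree $\leq 2$ have at most one neighbour outside the block; internal vertices have two path-neighbours and at most one external one, as needed.

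The main obstacle is verifying that the greedy path extension never gets stuck at a degree-$3$ endpoint. Suppose the current induced path is $v_0, \ldots, v_k$ with $\deg_G(v_k) = 3$ and non-path neighbours $w_1, w_2$, and suppose extending to either $w_\ell$ would create a chord, so $w_\ell$ is adjacent to some $v_{j_\ell}$ with $j_\ell \leq k-1$; take $j_\ell$ to be the largest such index for each $\ell$. If $j_1 = j_2 = k-1$, then $v_{k-1}$ would be adjacent to both $w_1$ and $w_2$ in addition to $v_{k-2}$ and $v_k$ (for $k \geq 2$), violating the degree bound; and for $k = 1$ the same would force $v_0$ to have three neighbours, contradicting the degree-two simpliciality from which we started. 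Hence some $j_\ell \leq k - 2$, and then by the maximality of $j_\ell$ and the induced-ness of $v_0, \ldots, v_k$ the cycle $w_\ell - v_k - v_{k-1} - \cdots - v_{j_\ell} - w_\ell$ is induced and of length at least four in $G \subseteq H'$, contradicting the chordality of $H'$. Consequently the extension always succeeds, the second phase exhausts $V(H')$, and reversing the peeling order yields the required decomposition.
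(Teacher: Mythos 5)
Your construction is correct and arrives at the same decomposition as the paper, but the way you extract the path blocks from $H'$ is genuinely different. The paper takes a \emph{longest} induced path in the remaining graph and derives a contradiction from the assumption that some endpoint has two outside-neighbours: it uses the closest-neighbour device ($v_x, v_y$), the absence of induced long cycles to force $vv_x$ and $vv_y$ to be edges, and the $K_4$-exclusion to rule out the resulting configuration. You instead make the chordality of $H'$ explicit (as a consequence of the maximality of the cycle family), invoke the existence of a simplicial vertex, observe that $K_4$-freeness and minimum degree $\ge 2$ force that vertex to have exactly two adjacent neighbours, and then grow an induced path greedily; the chordality and the degree bound are used to show the greedy extension never stalls at a degree-$3$ endpoint. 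Your route is a bit more algorithmic and arguably cleaner conceptually, since it names and uses the structural fact (chordality, simplicial vertices) that the paper only uses implicitly; the paper's route is shorter because picking a longest induced path sidesteps the need to handle low-degree vertices separately.

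One small point you should tighten: as written, ``grow an induced path from $v$ \ldots stopping as soon as the current endpoint has degree $\le 2$'' would stop immediately at $v$ itself (which has degree exactly $2$), producing a singleton block whose unique vertex has \emph{two} neighbours in the remaining graph, which violates the required bound. The fix is to say explicitly that you always extend at least once before testing the stopping rule; then $v_0$ has its second neighbour outside the block (that neighbour can never join the path, being adjacent to $v_1$), and your accounting for endpoints and internal vertices goes through.
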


\begin{proof}
  For simplicity in the proof, we specify the $B_i$'s in reverse order: for
  every $i \in \{1, \dotsc, t - 1\}$, each vertex in $B_i$ will have at most one
  neighbour in $B_{i+1} \cup \dotsb \cup B_t$.

  Let $\{B_i\}_{i \in [m]}$, for some $m \in \N$, be a maximal family of
  disjoint sets such that each $B_i$ induces a cycle of length at least four.
  Note that no matter how we specify the remaining sets, the desired degree
  property holds for $B_1, \dotsc, B_m$ due to $H$ having maximum degree three.

  Let $H' = H \setminus \bigcup_{i \in [m]} B_i$ and set $B_{m+1} \subseteq
  V(H')$ to be a largest subset which induces a path in $H'$. We aim to show
  that each vertex in $B_{m+1}$ has at most one neighbour in $R = V(H')
  \setminus B_{m+1}$. Suppose, towards a contradiction, that $v \in B_{m+1}$ has
  two neighbours $x, y \in R$, noting that $v$ must be an endpoint of the path
  $H'[B_{m+1}]$. Then both $x$ and $y$ necessarily have at least one more
  neighbour in $B_{m+1}$, as otherwise we could extend $H'[B_{m+1}]$ to a longer
  path. Let $v_x \in B_{m+1}$ be a neighbour of $x$ in $B_{m+1} \setminus \{v\}$
  which is closest to $v$ and define $v_y$ similarly, where the distance is
  measured along the path $H'[B_{m+1}]$. If $v v_x$ is not an edge, then the
  path from $v$ to $v_x$ in $H'[B_{m+1}]$ together with $xv$ and $xv_x$ forms an
  induced cycle of length at least four, contradicting the assumption that
  $H'$ has no such cycle. Therefore, $vv_x$ is an edge and, similarly, $vv_y$ is
  an edge. But then we must have that $v_x = v_y$ is the other
  endpoint of the path $H'[B_{m+1}]$ and so $H'[B_{m+1}]$ consists of a single
  edge. Note also that $xy$ is not an edge, as otherwise we would have that $H =
  K_4$. But then $\{x, v, y\}$ forms an induced path longer than $H'[B_{m+1}]$,
  contradicting the choice of $B_{m+1}$. Therefore, each vertex in $B_{m+1}$
  has at most one neighbour in $R$, as required.

  We now repeat this process, first by considering a longest induced path in
  $R$, until the entire vertex set is partitioned.
\end{proof}

An example of a partition of $V(H)$ as given by the procedure described above is
depicted in Figure~\ref{fig:H-partition}.

\begin{figure}[!htbp]
  \centering
  \includegraphics[scale=0.8]{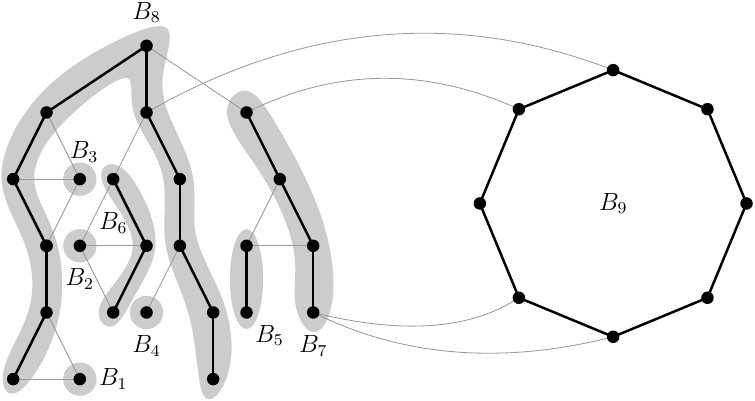}
  \caption{An example of a graph $H$ with a partition produced by
  Lemma~\ref{lem:decomposition}.}
  \label{fig:H-partition}
\end{figure}

We are now in a position to prove our main result. Recall the statement, that
there exist $c, K > 0$ such that if $p \ge Kn^{-2/5}$, then, with high
probability, $\Gnp \rightarrow H$ for every cubic graph $H$ with at most $cn$
vertices.

\begin{proof}[Proof of Theorem~\ref{thm:main_gnp}]
  As mentioned in Section~\ref{sec:proof_overview}, by following a standard
  reduction process, it suffices to show that $\Gamma \sim \Gnp$ with high
  probability satisfies the following: if a subgraph $G \subseteq \Gamma$
  contains disjoint sets of vertices $V_1, \dotsc, V_{20} \subseteq V(G)$, each
  of order $|V_i| = \tilde n = \alpha n$ for some constant $\alpha > 0$, such
  that each pair $(V_i, V_j)$ is $(\eps,p)$-regular of density at least $p/3$
  (with respect to $G$), then $G$ contains $H$. Here we take $\eps > 0$ to be as
  small as necessary for all our arguments to go through. This influences the
  choice of $\alpha$ and, consequently, decides how small $c$ has to be (recall
  that $H$ is a graph on $cn$ vertices), but the exact dependencies are not
  important. We can also assume that the density of each pair is exactly $d =
  p/3$ (see \cite[Lemma 4.3]{gerke2005sparse} or consider taking a random subset
  of the edges). We take this as our starting point.

  Using Lemma~\ref{lem:KLR}, we can find $cn$ vertex-disjoint copies of $K_4$ in
  $G$. Therefore, from now on we can assume that $H$ is $K_4$-free and
  connected. Let $B_1 \cup \dotsb \cup B_t = V(H)$ be a partition of $V(H)$ as
  given by Lemma~\ref{lem:decomposition} and let $\phi \colon V(H) \rightarrow
  [10]$ be an arbitrary colouring of $H$ such that if $v$ and $w$ are distinct
  vertices at distance at most two in $H$, then $\phi(v) \neq \phi(w)$.

  Remove from $G$ the `bad' set given by Lemma~\ref{lem:typical_vertices} for
  every choice of $(V_i, V_j)$ and, afterwards, take the subsets given by
  Lemma~\ref{lem:cleanup}. After `cleaning' the sets $V_1,\dotsc,V_{20}$ and
  subsequently renaming what remains, we are left with sets $\{V_i^0, V_i^1\}_{i
  \in [10]}$ such that $0.9 \tilde n \leq |V_i^*| \le \tilde n$, for $* \in
  \{0,1\}$, and the following properties hold for $(a,b) \in \{0,1\}^2$:
  \begin{enumerate}[label=(P{\arabic*}), leftmargin=3em]
    \item\label{emb-large-deg} $\deg_G(v, V_j^b) \geq \tilde n d/2$ for every
      $v \in V_i^a$ and all $i \neq j$;
    \item\label{emb-reg-inh} for each $i \neq j$ and any two vertices $v, w \not
      \in V_i^0 \cup V_i^1 \cup V_j^0 \cup V_j^1$, the pairs $(N_v, V_j^b)$ and
      $(N_v, N_w)$ are $(\eps', p)$-regular with density at least $d/2$ for any
      subsets $N_v \subseteq N_G(v, V_i^a)$ and $N_w \subseteq N_G(w, V_j^b)$ of
      order $\tilde n d / 4$.
  \end{enumerate}

  We sequentially embed the blocks $B_1, \dotsc, B_t$, where for each block $B_i$ we
  do the following:
  \begin{enumerate}[label=(Q{\arabic*}), leftmargin=3em]
    \item\label{proc-neighbour} For each $v \in B_i$, let $u_v \in V(G)$ be the
      image of its already embedded neighbour $a_v$ from $B_1 \cup \dotsb \cup
      B_{i-1}$. Without loss of generality, we can assume that $v$ always has
      such a neighbour.
    \item\label{proc-candidate} Choose the smallest $z_v \in \{0, 1\}$ such that
      $N_G\big(u_v, V_{\phi(v)}^{z_v}\big)$ has at least $\tilde n d / 4$
      vertices which are not already taken by the embedding of any vertex in
      $B_1 \cup \dotsb \cup B_{i-1}$. If no $z_v \in \{0,1\}$ has this property,
      terminate the procedure. Otherwise, let us denote the set of such `free'
      neighbours of $u_v$ by $F_v$.
    \item\label{proc-embed} As $|F_v| \ge \tilde n d / 4$ for each $v \in B_i$,
      property~\ref{emb-reg-inh} ensures that the conditions of
      Lemma~\ref{lem:tree-embedding-lemma} (`tree embedding') and
      Lemma~\ref{lem:cycle-embedding-lemma} (`cycle embedding') are satisfied,
      so there exists a copy of $H[B_i]$ in $G$ which maps each vertex $v \in
      B_i$ into its corresponding set $F_v$. Fix such an embedding of $H[B_i]$
      and proceed. (Minor technical detail: here we used the fact that if $w \in
      B_i$ is a neighbour of $v \in B_i$, then $\phi(a_v) \neq \phi(w)$, so that
      \ref{emb-reg-inh} can be applied for $F_v \subseteq V_{\phi(v)}^{z_v}$ and
      $F_w \subseteq V_{\phi(w)}^{z_w}$.)
  \end{enumerate}
  Assuming that the procedure did not terminate in step~\ref{proc-candidate}, we
  have successfully found a copy of $H$ in $G$. It therefore remains to show
  that the process does not terminate early. As in the proof of
  Lemma~\ref{lem:tree-embedding-lemma}, we will again use the first-free-bucket
  embedding strategy. However, the proof here is simpler as we only have two
  buckets to consider.

  Recall that $H$ has $cn$ vertices, so the set $X$ of all occupied vertices in
  $V_1^0 \cup \dotsb \cup V_{10}^0$ is of order at most $cn$. Let $Z_1$ denote
  the vertices $v \in V(H)$ for which $z_v = 1$ (as defined
  in~\ref{proc-candidate}) and let $U = \{u_v : v \in Z_1\}$. Then $|Z_1|$ is an
  upper bound on the number of occupied vertices in $V_1^1 \cup \dotsb \cup
  V_{10}^1$. We claim that $|Z_1| = O(1/p)$, which, together with the fact that
  each $u_v$ has at least $\tilde n d/2 \gg 1/p$ neighbours in $V_{\phi(v)}^1$
  (property~\ref{emb-large-deg}), implies that the procedure does not terminate
  in step \ref{proc-candidate}.

  Suppose, towards a contradiction, that $|Z_1| = 3C/p$ for a sufficiently large
  constant $C$ and so $|U| \geq C/p$. Then, by \ref{emb-large-deg} and
  \ref{proc-candidate}, we have that each vertex in $U$ has at least $\tilde n
  d/4$ neighbours in $X$. Hence, $e_G(U,X) \ge |U|\tilde n d/4$. On the other
  hand, for $C$ sufficiently large, a simple application of Chernoff's
  inequality and the union bound implies that w.h.p.~the number of edges in
  $\Gamma \sim \Gnp$ between any set $U$ of order at least $C/p$ and any set $X$
  of order $cn$ is at most $2|U||X|p$. Therefore, since $G \subseteq \Gamma$
  (and taking a superset of $X$ of sufficient size if necessary), $e_G(U, X) \le
  e_\Gamma(U, X) < 2|U|cnp$. Both the upper and lower bounds on $e_G(U, X)$ are
  of the same order of magnitude, but only the upper bound depends on $c$. Thus,
  for $c$ sufficiently small with respect to $\alpha$ (which is hidden in
  $\tilde n$), we get a contradiction.
\end{proof}

\section{Concluding remarks}
\label{sec:concluding}

As in \cite{kohayakawa2011sparse}, the proof of Theorem~\ref{thm:main_gnp}
easily extends to more than two colours and, more importantly, it actually gives
that in every $q$-colouring of $\Gnp$ one of the colours is \emph{universal} for
the family of cubic graphs with at most $cn$ vertices, that is, it contains all
such graphs simultaneously. It is known that any graph with this property has to
have $\Omega(n^{4/3})$ edges (see, for instance, \cite{alon08optimal}). In other
words, in order to go past that bound, one has to consider different host graphs
for different $H$. However, we are nowhere near that bound yet.

Recall that the argument for why $n^{8/5}$ is the best one can get from random
graphs is that, for $p = o(n^{-2/5})$, a typical $\Gnp$ is not Ramsey for $K_4$.
However, $K_4$ is the unique connected cubic graph with a fixed number of vertices which requires such a bound on $p$: for
any other such graph, $p =
\Theta(n^{-1/2})$ suffices (see \cite{rodl1993lower, rodl1995threshold}).
Therefore, it is not inconceivable that by adding some additional structure on
top of the random graph to deal with the $K_4$'s, one could obtain a bound of
type $n^{8/5 - \eps}$ or better.

Note that the only places where we used $p \ge Kn^{-2/5}$ in the proof of
Theorem~\ref{thm:main_gnp} were to invoke Lemma~\ref{lem:cycle-embedding-lemma}
to embed $C_4$ and to embed all components of $H$ isomorphic to $K_4$. In
particular, if $H$ does not contain $K_4$ and we can ensure in
Lemma~\ref{lem:decomposition} that all cycles are of length at least five, then
the same proof goes through with $p = \Theta(n^{-3/7})$. For longer cycles, this
bound decreases even further, being governed by the bound in
Lemma~\ref{lem:cycle-embedding-lemma}. Thus, we have the following improvements
over Theorem~\ref{thm:main-theorem}.

\begin{theorem}
  \label{thm:bipartite}
  There exists a constant $K$ such that $\hat r(H) \leq Kn^{11/7}$ for every
  triangle-free cubic graph $H$. Moreover, if $H$ is a cubic bipartite graph,
  then $\hat r(H) \leq Kn^{14/9}$.
\end{theorem}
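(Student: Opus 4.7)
The proof adapts the argument of Theorem~\ref{thm:main_gnp}, tracking where the bound $p \geq Kn^{-2/5}$ was used. Only two places in that proof contribute it: the preliminary step that strips off $K_4$-components of $H$ via Lemma~\ref{lem:KLR}, and the invocation of Lemma~\ref{lem:cycle-embedding-lemma} with $\ell = 4$ to embed cycle-blocks of length four. Both obstructions vanish in the present setting. A triangle-free $H$ contains no $K_4$, and, as we aim to show, the decomposition produced by Lemma~\ref{lem:decomposition} can be strengthened so that every cycle-block has length at least $\ell$, with $\ell = 5$ in the triangle-free case and $\ell = 6$ in the bipartite case (the latter relying on the absence of odd induced cycles, in particular of any induced $C_5$). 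Invoking Lemma~\ref{lem:cycle-embedding-lemma} with this new $\ell$ then requires only $p \geq Kn^{-(\ell-2)/(2\ell-3)}$, which gives $p = \Theta(n^{-3/7})$ and $p = \Theta(n^{-4/9})$ respectively, translating via $|E(\Gnp)| = \Theta(n^2p)$ into the claimed bounds $\hat r(H) \leq Kn^{11/7}$ and $\hat r(H) \leq Kn^{14/9}$.

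The main technical step is therefore the strengthened decomposition. The plan is to mimic the proof of Lemma~\ref{lem:decomposition}: take a maximal family of disjoint induced cycles of length at least $\ell$, then partition the remainder $H'$ into induced paths respecting the $1$-degenerate ordering. Under triangle-freeness, or bipartiteness in the second case, the induced cycles surviving in $H'$ are all $C_4$'s. One then revisits the contradiction argument of Lemma~\ref{lem:decomposition}, where a longest induced path $P$ with an endpoint $v$ having two external neighbours $x,y$ produces a cycle through a prefix of $P$ and back via $x$ (or $y$): if that cycle has length at least $\ell$ we contradict maximality as before, while if it has length four, we are in a very constrained configuration which, thanks to triangle-freeness and the cubic structure, either lets us extend $P$ or lets us split the offending $C_4$ into two single-edge sub-blocks whose placement in the ordering preserves the degree property.

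A separate subtlety is that certain small triangle-free cubic graphs, for instance $Q_3$, admit no decomposition of the strengthened form at all, since the final block must be an induced cycle of length at least $\ell$ absorbing every vertex of insufficient internal degree, and no such cycle need exist. I would handle these exactly as Theorem~\ref{thm:main_gnp} handled $K_4$-components: at the outset, strip off every connected component of $H$ whose order is at most some absolute constant. Any triangle-free cubic graph $H_0$ on a bounded number of vertices satisfies $m_2(H_0) \leq 2$, with equality attained by $K_{3,3}$, and the hypothesis $p \geq Kn^{-3/7}$ (respectively $Kn^{-4/9}$) comfortably exceeds $Kn^{-1/2}$, so Lemma~\ref{lem:KLR} embeds a disjoint copy of every such small component into an appropriate constellation of regular pairs.

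The principal obstacle is the combinatorial step establishing the strengthened decomposition for sufficiently large connected triangle-free or bipartite cubic graphs. The overall outline mirrors Lemma~\ref{lem:decomposition}, but because induced $C_4$'s may persist in the remainder, the analysis of the longest induced path becomes noticeably more intricate than in the original proof, and ensuring that the degree property can always be rescued is where the real work lies.
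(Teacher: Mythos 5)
Your high-level plan is correct and coincides with the paper's: track where $p \geq Kn^{-2/5}$ was needed (stripping $K_4$'s and embedding $C_4$-blocks), strengthen Lemma~\ref{lem:decomposition} so that all cycle-blocks have length at least $5$ (resp.\ $6$), handle small connected components separately via Lemma~\ref{lem:KLR}, and read off the new thresholds from Lemma~\ref{lem:cycle-embedding-lemma}. But the single genuinely hard step---the strengthened decomposition---is exactly the one you leave unresolved, and the repair ideas you sketch do not match what actually closes the gap.

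Concretely, the paper first adds a temporary pendant vertex to every degree-two vertex of $H'$ (the graph after removing a maximal family of disjoint induced cycles of length at least $5$), so that every vertex of the augmented $H'$ has degree $1$ or $3$. It then takes a longest induced path $v_1,\dotsc,v_t$ and runs the contradiction argument: if an endpoint $v_1$ has two neighbours $x,y$ off the path, maximality of the cycle family forces both of their closest path-neighbours to be $v_3$ (any larger gap yields an induced cycle of length at least $5$), and cubicity of $v_3$ forces $t=3$. From there, the third neighbour $z$ of $x$ is shown to be distinct from $\{y,v_1,v_2,v_3\}$, and both $yz$ and $zv_2$ must be edges (else a longer induced path exists), which pins $H'$ down as the $6$-vertex cubic graph $K_{3,3}$; this contradicts $|V(H)|\geq 7$. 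Your proposed alternative---``split the offending $C_4$ into two single-edge sub-blocks''---does not obviously preserve the $1$-degenerate ordering property (a vertex of that $C_4$ can have two neighbours among earlier blocks), and nothing in the proposal verifies it. The bipartite case is also simpler than you suggest: one does not re-run the decomposition argument under bipartiteness; one merely observes that induced cycles in a bipartite graph are even, so the length-$\geq 5$ guarantee automatically becomes length-$\geq 6$.

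Finally, your example is off: $Q_3$ (the $3$-cube, $8$ vertices) \emph{does} admit the strengthened decomposition---take any induced $C_6$ as the last block and the two remaining non-adjacent vertices as two singleton path-blocks before it; each cycle vertex has exactly one neighbour among those singletons. The genuinely problematic component is $K_{3,3}$, which has only $6$ vertices, and indeed the paper's argument shows that \emph{every} connected triangle-free cubic graph on at least $7$ vertices decomposes, so the small components you set aside are exactly those with at most $6$ vertices (all of which are $K_4$-free, so $m_2 \leq 2$ and Lemma~\ref{lem:KLR} applies comfortably at $p = \Theta(n^{-1/2})$).
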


\begin{proof}[Sketch of the proof]
  We first show that if $H$ is a connected triangle-free cubic graph with at
  least $7$ vertices, then it has a decomposition as in
  Lemma~\ref{lem:decomposition}, but where each cycle is now of length at least
  $5$. Following the proof of Lemma~\ref{lem:decomposition}, we start by
  removing from $H$ a maximal family of disjoint induced cycles of length at
  least $5$. Let $H'$ denote the resulting graph and, for each vertex $v \in
  V(H')$ of degree two (in $H'$), add a temporary new vertex which is joined
  only to $v$. Therefore, each vertex in this temporarily expanded $H'$ has
  degree either zero, one, or three. Consider a longest induced path in $H'$,
  say $v_1, \ldots, v_t$. We will show that it has the desired property that
  every $v_i$ has at most one neighbour in $V(H') \setminus \{v_1, \ldots,
  v_t\}$, in which case we can remove it from $H'$, together with all the
  temporary vertices, and proceed. Note that at least one vertex in $\{v_1,
  \ldots, v_t\}$ is an `original' vertex, so we indeed make progress.

  The only two vertices which can violate the desired property are $v_1$ and
  $v_t$. Without loss of generality, consider $v_1$ and suppose it has two
  neighbours $x, y \in V(H') \setminus \{v_2\}$. Then both of these vertices
  need to have a neighbour in $\{v_2, \ldots, v_t\}$, as otherwise we would get
  a longer induced path. Let us denote such a neighbour of $x$ closest to $v$ by
  $v_x$ and of $y$ by $v_y$. Then we necessarily have $v_x = v_y = v_3$, as
  otherwise we get an induced cycle of length at least $5$. Therefore, the
  longest induced path is of length two (i.e., $t = 3$). As $x$ has degree
  three, it must also have another neighbour $z$, which, as $H$ is
  triangle-free, lies outside of $\{y, v_1, v_2, v_3\}$
  (Figure~\ref{fig:triangle-free-partition}).

  \begin{figure}[!htbp]
  \centering
  \begin{tikzpicture}
    \draw[fill=black] (0,0) circle (2pt) node[above] {$x$};
    \draw[fill=black] (1.5,0) circle (2pt) node[above] {$v_1$};
    \draw[fill=black] (3,0) circle (2pt) node[above] {$v_2$};
    \draw[fill=black] (4.5,0) circle (2pt) node[above] {$v_3$};
    \draw[fill=black] (0,-1.5) circle (2pt) node[below] {$z$};
    \draw[fill=black] (1.5,-1.5) circle (2pt) node[below] {$y$};

    \draw (0, -1.5) -- (0,0) -- (1.5,0) -- (3,0) -- (4.5, 0) -- (1.5, -1.5) -- (1.5, 0);
    \draw[dashed] (3, 0) -- (0,-1.5) -- (1.5, -1.5);
    \draw (0,0) to [bend left] (4.5,0);
  \end{tikzpicture}
  \caption{Finishing the decomposition.}
  \label{fig:triangle-free-partition}
 \end{figure}
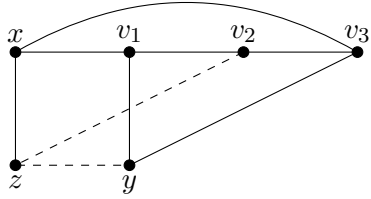

  Now we are almost done: if $yz$ is not an edge, then we get a longer induced
  path $y, v_1, x, z$, while if $zv_2$ is not an edge, then we get the path $z,
  x, v_1, v_2$. We must therefore have that $yz$ and $z v_2$ are both edges,
  which implies that $H'$, without the temporary vertices, is a connected graph
  with $6$ vertices where every vertex has degree three. But then $H' = H$,
  contradicting our assumption that $H$ has at least $7$ vertices.

  Therefore, there exists a decomposition with each cycle being of length at
  least $5$, as required. Moreover, if $H$ is bipartite, each such cycle has to
  have length at least $6$. To embed $H$, we then proceed by first embedding all
  connected components with at most $6$ vertices using Lemma~\ref{lem:KLR},
  which is possible already at $p = \Theta(n^{-1/2})$, as no such component is
  isomorphic to $K_4$. To embed cycles through subsets of order $\Theta(np)$
  using Lemma~\ref{lem:cycle-embedding-lemma}, we require $p =
  \Theta((np)^{-3/4})$ if they are of length at least $5$ and $p =
  \Theta((np)^{-4/5})$ if they are of length at least $6$, estimates which
  return the required bounds on $p$. The rest of the proof is then identical to
  the proof of Theorem~\ref{thm:main_gnp}.
\end{proof}

One can also obtain better bounds than those in \cite{kohayakawa2011sparse} if
$H$ has a special structure. For example, Clemens, Miralaei, Reding, Schacht,
and Taraz~\cite{clemens2021grid} showed that if $H$ is a $\sqrt{n} \times
\sqrt{n}$ grid, then $\hat r(H) = O(n^{3/2 + o(1)})$, which is essentially the
best one can get from random graphs in this case. Our methods immediately imply
this result, as the grid graph has a decomposition into induced paths (given by
`vertical' lines) which admit a `1-degenerate' ordering (i.e., the second
property of Lemma~\ref{lem:decomposition}). One notable feature of this argument
is that even though $H$ has many copies of $C_4$, we do not embed them directly
using Lemma~\ref{lem:cycle-embedding-lemma}.

Finally, it is worth noting that the problem of improving the bound from
\cite{kohayakawa2011sparse} for general bounded-degree graphs remains open. For
the class of triangle-free graphs $H$ with maximum degree $\Delta$, a rather
convoluted argument from the second author's PhD thesis \cite{rajko_thesis}
shows that $\hat{r}(H) = O(n^{2 - 1/\Delta - \eps_\Delta})$ for some
$\eps_\Delta > 0$. Regarding the methods presented here, analogues of
Lemmas~\ref{lem:tree-embedding-lemma} and~\ref{lem:cycle-embedding-lemma} with
each $N_v$ now contained in the neighbourhood of at most $\Delta-2$ vertices go
through provided $p = \Theta(n^{-1/(\Delta - 0.5)})$. Moreover, an easy
modification of the proof of Lemma~\ref{lem:decomposition} shows that a
connected graph $H$ with maximum degree $\Delta$, not isomorphic to
$K_{\Delta+1}$, admits a decomposition into blocks $B_1, \dotsc, B_t$ such that
each $B_i$ is either a path or a cycle of length at least four and each vertex
in $B_i$ has at most $\Delta-2$ neighbours in $B_1 \cup \dotsb \cup B_{i-1}$.
But, since these are the main ingredients of our proof in the $\Delta = 3$ case,
why does the proof not go through for larger $\Delta$?

The answer is somewhat technical, but the heart of the matter is that, for
$\Delta = 3$, no two vertices in $B_i$ can have a common neighbour in $B_{i+1}
\cup \dotsb \cup B_t$, which means that the inheritance properties promised by
Lemma~\ref{lem:typical_vertices} are sufficient to continue the embedding
process. If $\Delta > 3$, it can happen that two (or more) vertices in $B_i$
have such a common neighbour, which means that much more care is needed to
ensure that the candidate set for each vertex has the desired size and that pairs
inherit regularity in the correct way. It is entirely possible that this could
be done, but we have chosen not to pursue the matter here. Another, more
concrete, reason we did not pursue the matter further is that, if embedding
$K_{\Delta+1}$ is again the main obstacle, what one would really like to show is
that there are $c, K > 0$ such that if $p \geq K n^{-2/(\Delta+2)}$, then, with
high probability, $\Gnp \rightarrow H$ for any graph $H$ with at most $cn$
vertices and maximum degree $\Delta$. This is Theorem~\ref{thm:main_gnp} for
$\Delta = 3$, but it is almost certain that the techniques described here are
not sufficient to meet this bound for larger values of $\Delta$.

{\small \bibliographystyle{abbrv} \bibliography{large_rodl_rucinski}}

\end{document}